\def\ml{l\kern-0.035cm\char39\kern-0.03cm}
\theoremstyle{plain}
  \newtheorem{Theorem}{Theorem}
  \newtheorem{Lemma}{Lemma}
  \newtheorem{Corollary}{Corollary}
  \newtheorem{Conjecture}{Conjecture}
  \newtheorem{Observation}{Observation}
\theoremstyle{definition}
\begin{document}

\begin{center}
{\LARGE Total Thue colourings of graphs\\}
\end{center}

\begin{center}
\Large{Jens Schreyer}\footnote{Institute of Mathematics,
Faculty of Mathematics and Natural Sciences, Ilmenau University of Technology, Ilmenau, Germany, \quad [jens.schreyer@tu-ilmenau.de]}
and
\Large{Erika \v Skrabu\ml \'akov\'a}\footnote{Institute of Control and Informatization of Production Processes, Faculty of Mining, Ecology, Process Control and Geotechnology, Technical University of Ko\v sice, Ko\v sice, Slovakia, \quad [erika.skrabulakova@tuke.sk]}\footnote{This work was supported by the Slovak Research and Development Agency under the contract No. APVV-0482-11, by the grant VEGA 1/0130/12 and DAAD.}
\end{center}

\smallskip

\bigskip

{\abstract A total colouring of a graph is a colouring of its vertices and edges such that no two adjacent vertices or edges have the same colour and moreover, no edge coloured $c$ has its endvertex coloured $c$ too.  A weak total Thue colouring of a graph $G$ is a colouring of its vertices and edges such that the colour sequence of consecutive vertices and edges of every path of $G$ is nonrepetitive. In a total Thue colouring also the induced vertex-colouring and edge-colouring of $G$ are nonrepetitive.   The weak total Thue number $\pi_{T_w}(G)$ of a graph $G$ denotes the minimum number of colours required in every weak total Thue colouring and the minimum number of colours required in every total Thue colouring is called the total Thue number $\pi_T$. \\
Here we show some upper bounds for both parameters depending on the maximum degree or size of the graph. We also give some lower bounds and some better upper bounds for these graph parameters considering special families of graphs.}\\

{\bf MSC:} 05C15 (05C55, 05D40)

{\bf Keywords:} nonrepetitive colouring, total Thue number

\section{Introduction}

\hspace{0,4cm}
A finite sequence $R=r_1 r_2 \dots r_{2n}$ of symbols is called a {\em repetition} if $r_i=r_{n+i}$ for all $i=1,2,\dots,n$. A sequence $S$ is called {\em repetitive} if it contains a subsequence of consecutive terms that is a repetition. Otherwise $S$ is called  {\em nonrepetitive}. Nonrepetitive sequences were first studied by Axel Thue in the beginning of the last century as a part of the investigation of word structures. In his famous paper from 1906 \cite{Th06} he showed the existence of arbitrarily long nonrepetitive sequences consisting only of three different symbols. Later these sequences found widespread applications not only in mathematics, but also in informatics, data security management and elsewhere. Their first appearance in graph theory was in 1987 (see Currie \cite{Cur87}) but the investigation of nonrepetitive graph colourings began with the seminal paper of Alon et. al. from 2002 \cite{AGHR02}.\\
Let $\varphi$ be a colouring of the vertices of a graph $G$.  We say that $\varphi$ is a {\em nonrepetitive vertex-colouring} of $G$  if for any simple path on vertices $v_1$, $v_2, \dots, v_{2n}$ in $G$ the associated sequence of colours $\varphi(v_1)$ $\varphi(v_2) \dots \varphi(v_{2n})$ is not a repetition. The minimum number of colours in a nonrepetitive vertex-colouring of a graph $G$ is the {\em Thue chromatic number} $\pi(G)$. Analogously {\em nonrepetitive edge-colourings} and the {\em Thue chromatic index} $\pi'(G)$ are defined.
The original paper of Alon et. al. \cite{AGHR02} introduced both variants of colouring however it foccused on the  edge variant. Here it was proved that for arbitrary graph $G$ it holds $\pi'(G)\leq (2\cdot e\sp{16}+1)\Delta^2$, where $\Delta$ is the maximum degree of the considered graph. Unhappily the Thue chromatic index of the graph $G$ is here called the Thue number of $G$ and denoted $\pi(G)$. This was the cause of a lot of misunderstanding and misprints in several subsequent papers that gave more attention to the vertex-colouring variant for which the upper bound of the same form: $C\cdot\Delta^2$, is known. The constant $C$ was improved several times and the best known bound today is by Dujmovi\'c et. al. \cite{Dujmovic} who could show that for large graphs $C$ even tends to 1. The bound is almost best possible because it is known that there are infinitely many graphs with Thue chromatic number at least $c\cdot\Delta^2/\log~\Delta$.
There are some classes of graphs, where the Thue chromatic number is known exactly. For paths it was already shown by Thue himself \cite{Th06}, and Currie \cite{Cur02} showed that for every cycle of length $n\in\{5$, $7$, $9$, $10$, $14$, $17\}$ $\pi(C_n)=4$  and  for other lengths of cycles on at least $3$ vertices $\pi(C_n)=3$.  In \cite{Gr07} various questions concerning nonrepetitive colourings of graphs have been formulated and consequently, also a lot of their variations appeared in the scientific literature (see e.g. Bar\'at and Czap \cite{BaCz11},  Czervi\' nski and Grytczuk \cite{CzGr07}, Grytczuk et. al. \cite{GKM10}, \cite{GPZ10} or Schreyer and \v Skrabu\ml \' akov\' a \cite{JSES11}).\\
The purpose of this paper is a first look at nonrepetitive total colourings of a graph. A (proper) total colouring of a graph is a colouring of its vertices and edges, where no two adjacent vertices or edges have the same colour and moreover, no edge has the same colour as an incident vertex. We want to apply the concept of nonrepetitive colourings to total graph colourings and do it in two different ways. If a colouring $\varphi$ of the vertices and edges of a graph $G$ has the property that the colour sequence of consecutive vertex- and edge-colours of every path in $G$ is nonrepetitive, we call $\varphi$ a {\em weak total Thue colouring}. If moreover, both the induced vertex- and edge-colourings are nonrepetitive as well, we call $\varphi$ a {\em (strong) total Thue colouring} of $G$. The minimum number of colours appearing in such a colouring is called  {\em  weak total Thue chromatic number } $\pi_{T_w}(G)$ for the first case and  {\em total Thue chromatic number} $\pi_T(G)$ for the latter case. Note that while every total Thue colouring is a proper total colouring, this does not need to be the case for weak total Thue colourings, because two adjacent vertices or edges may have the same colour.\\
In this paper we show that the total Thue chromatic number is lesser than $15\cdot\Delta\sp{2}$, where $\Delta\geq 3$ is the maximum degree of the graph. The bound is extended $18\Delta^2$ for the list version of the problem.  For the weak total Thue chromatic number we show $\pi_{T_w}\leq|E(G)|-|V(G)|+5$, that for planar graphs with $k$ faces gives $\pi_{T_w}\leq 3+k$. We also give some upper and lower bounds for these parameters considering special classes of graphs.

\section{Basic observations and preliminary lemmas}

\hspace{0,4cm} We will use the following Lemma, that  was proved in \cite{HJSS09}.
For a sequence of symbols $S=a_1$ $a_2\dots a_n$ with $a_i\in\mathbb{A}$ for some set $\mathbb A$, for all $1\leq k\leq l\leq n$, the block $a_k$ $a_{k+1}\dots a_{l}$ is denoted by $S_{k,l}$  here.

\begin{Lemma} {(\cite{HJSS09})}\label{1}
Let $A=a_1 a_2 \dots a_m$ be a nonrepetitive sequence with $a_i\in\mathbb{A}$ for all \    $i=1,2,\dots,m$. Let $B^i=b^i_1 b^i_2 \dots b^i_{m_i}$; $0\leq i\leq r+1$, be nonrepetitive sequences with $b\sp{i}_j\in\mathbb{B}$ for all \  $i=0,1,\dots,r+1$ and $j=1,2,\dots,m_i$. If $\mathbb{A}\cap\mathbb{B}=\emptyset$, then $S=B^0$ $A_{1,n_1}$ $B^1$ $A_{n_1+1,n_2}\dots B^r$ $A_{n_r+1,m}$ $B^{r+1}$ is a nonrepetitive sequence.
\end{Lemma}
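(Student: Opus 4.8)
The plan is to argue by contradiction: suppose $S$ contained a repetition, i.e.\ a block $R=r_1r_2\cdots r_{2n}$ of consecutive terms of $S$ with $r_j=r_{n+j}$ for all $j=1,\dots,n$. The guiding idea is that, since $\mathbb{A}\cap\mathbb{B}=\emptyset$, one can separate the letters of $R$ lying in $\mathbb{A}$ from those lying in $\mathbb{B}$, and the equalities $r_j=r_{n+j}$ force this separation to be ``periodic with period $n$''; a repetition inside $S$ would then produce a repetition inside $A$ or inside one of the $B^i$, which is impossible.

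Concretely, I would first call a position of $S$ (or of $R$) an $\mathbb{A}$-position or a $\mathbb{B}$-position according to whether the letter there lies in $\mathbb{A}$ or in $\mathbb{B}$; this is well defined and exhaustive because $\mathbb{A}\cap\mathbb{B}=\emptyset$ and every letter of $S$ lies in $\mathbb{A}\cup\mathbb{B}$. From $r_j=r_{n+j}$ it follows that positions $j$ and $n+j$ of $R$ always have the same type, so the $\mathbb{A}$-positions of the first half $r_1\cdots r_n$ are some $j_1<\cdots<j_p$ while those of the second half $r_{n+1}\cdots r_{2n}$ are exactly $n+j_1<\cdots<n+j_p$, the same index set shifted by $n$ (and similarly for $\mathbb{B}$-positions).

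The second ingredient is that deleting from $S$ every letter of $\mathbb{B}$ leaves exactly $A_{1,n_1}A_{n_1+1,n_2}\cdots A_{n_r+1,m}=a_1a_2\cdots a_m=A$, since these blocks partition $A$ and occur in $S$ in order. Hence the $\mathbb{A}$-letters occurring inside the block $R$ form a block $A_{s+1,s+2p}$ of $A$ of length $2p$, whose first $p$ letters $r_{j_1}\cdots r_{j_p}$ coincide termwise with its last $p$ letters $r_{n+j_1}\cdots r_{n+j_p}$. If $p\geq1$ this is a repetition in the nonrepetitive sequence $A$ --- a contradiction; so $p=0$ and $R$ consists only of letters of $\mathbb{B}$. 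But the $A$-blocks $A_{1,n_1},A_{n_1+1,n_2},\dots,A_{n_r+1,m}$ are all nonempty (the subscripts force $1\leq n_1<n_2<\cdots<n_r\leq m-1$), so between any two of the sequences $B^0,B^1,\dots,B^{r+1}$ there is at least one $\mathbb{A}$-letter of $S$; consequently every maximal run of consecutive $\mathbb{B}$-letters of $S$ lies inside a single $B^i$. Then $R$ is a block of some $B^i$, contradicting the nonrepetitiveness of $B^i$ (a repetition has length $\geq2$, so the contradiction is genuine), and the proof is complete.

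I expect the heart of the matter to be the middle step: checking that the $\mathbb{A}$-subword of $R$ is at once a contiguous block of $A$ and is cut exactly in half into two identical words by the repetition structure. Once one observes that the $\mathbb{A}/\mathbb{B}$ type pattern of $R$ repeats with period $n$, this is only careful bookkeeping, but it is precisely here that the hypothesis $\mathbb{A}\cap\mathbb{B}=\emptyset$ is essential, and overlooking it would break the argument.
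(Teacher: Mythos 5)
Your proof is correct and complete. Note that the paper itself does not prove this lemma --- it is quoted from the reference [HJSS09] --- but your argument (project the hypothetical repetition onto its $\mathbb{A}$-letters and its $\mathbb{B}$-letters, use $\mathbb{A}\cap\mathbb{B}=\emptyset$ to see the type pattern has period $n$, and derive a repetition in $A$ or in a single $B^i$) is exactly the standard proof given in that source, with the key points (the $\mathbb{A}$-subword of $R$ is a contiguous block of $A$ split evenly; the nonempty $A$-blocks force an all-$\mathbb{B}$ repetition into one $B^i$) handled correctly.
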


A sequence of length $k$ consisting of $k$ different symbols is called a {\em  rainbow} sequence. A rainbow sequence  is trivially nonrepetitive and  if each sequence $B\sp{i}$; $0\leq i\leq r+1$, from Lemma \ref{1} consists of only one element, then it is also trivially nonrepetitive:

\begin{Corollary} \label{2}
Let $A=a_1 a_2 \dots a_m$ be a rainbow sequence with $a_i\in\mathbb{A}$ for all \  $i=1,2,\dots,m$. For $i=0,1,\dots,r+1$ let $B^i\notin\mathbb{A}$. Then $S=B^0$ $A_{1,n_1}$ $B^1$ $A_{n_1+1,n_2}\dots B^r$ $A_{n_r+1,m}$ $B^{r+1}$  is a nonrepetitive sequence.
\end{Corollary}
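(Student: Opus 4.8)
The plan is to obtain the statement as an immediate specialisation of Lemma~\ref{1}, so the whole task reduces to verifying that the three hypotheses of that lemma hold for the data at hand. First I would fix the alphabet for the outer blocks: set $\mathbb{B} := \{B^0, B^1, \dots, B^{r+1}\}$, the set of symbols occurring as the blocks $B^i$. Since each $B^i \notin \mathbb{A}$ by assumption, we get $\mathbb{A} \cap \mathbb{B} = \emptyset$, which is precisely the separation hypothesis required by Lemma~\ref{1}. (Nothing is lost by letting some $B^i$ coincide with each other — Lemma~\ref{1} imposes no distinctness or pairwise-disjointness condition on the $B^i$.)

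Next I would check the two nonrepetitiveness hypotheses. The sequence $A = a_1 a_2 \dots a_m$ is rainbow, hence its terms are pairwise distinct; a repetition $r_1 r_2 \dots r_{2n}$ must satisfy $r_1 = r_{n+1}$, so no block of $A$ can be a repetition, and $A$ is nonrepetitive, as Lemma~\ref{1} demands of its sequence $A$. Each $B^i$ is a single symbol, i.e.\ a sequence of length $1$ (so $m_i = 1$ in the notation of Lemma~\ref{1}); since every repetition has even length at least $2$, a length-$1$ sequence contains no repetition and is trivially nonrepetitive, which is exactly what the lemma asks of each $B^i$. With all hypotheses in place, Lemma~\ref{1} yields that $S = B^0\, A_{1,n_1}\, B^1\, A_{n_1+1,n_2} \dots B^r\, A_{n_r+1,m}\, B^{r+1}$ is nonrepetitive, which is the claim.

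I do not expect any real obstacle here: the entire combinatorial content sits in Lemma~\ref{1}, and the corollary is merely the case in which the interleaved blocks degenerate to singletons and the spliced sequence is rainbow. The only step worth spelling out explicitly is the remark that a one-term sequence is nonrepetitive, which follows purely from the parity of the length of a repetition, and the trivial observation that a rainbow sequence admits no repetition. Accordingly I would keep the proof to two or three lines, invoking Lemma~\ref{1} with $\mathbb{B} = \{B^0,\dots,B^{r+1}\}$ after these two one-line verifications.
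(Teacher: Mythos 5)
Your proposal is correct and follows essentially the same route as the paper, which likewise obtains the corollary by specialising Lemma~\ref{1} after noting that a rainbow sequence is trivially nonrepetitive and that each single-symbol block $B^i$ is trivially nonrepetitive. The only difference is that you spell out the choice of $\mathbb{B}$ and the disjointness check explicitly, which the paper leaves implicit.
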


Moreover, it is easy to see that every nonrepetitive vertex-colouring of $G$ with $\pi(G)$ colours together with one additional colour (not used for the colouring of the vertices of $G$) used to colour all the edges of $G$ gives a weak total Thue colouring of $G$ according to Corollary~\ref{2}. A similar argument holds for nonrepetitive edge-colourings of a graph $G$. Hence, $\pi_{T_w}\leq \pi(G)+1$ and $\pi_{T_w}\leq \pi'(G)+1$. Therefore, we have:

\begin{Observation} \label{obe}
 $\pi_{T_w}\leq \min\{\pi(G), \pi'(G)\}+1$.
\end{Observation}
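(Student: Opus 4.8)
The plan is to build, from an optimal nonrepetitive vertex-colouring (respectively edge-colouring) of $G$, a weak total Thue colouring that uses only one additional colour, exactly as sketched in the paragraph preceding the statement; the work is just to check that the hypotheses of Lemma~\ref{1} are met.

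First I would prove $\pi_{T_w}(G)\le\pi(G)+1$. Fix a nonrepetitive vertex-colouring $\varphi_V$ of $G$ using the colour set $\mathbb A$ with $|\mathbb A|=\pi(G)$, and pick a fresh colour $\beta\notin\mathbb A$. Define a total colouring $\varphi$ by $\varphi(v)=\varphi_V(v)$ for every $v\in V(G)$ and $\varphi(e)=\beta$ for every $e\in E(G)$. To see that $\varphi$ is a weak total Thue colouring, take any path $P=v_1e_1v_2e_2\cdots e_{k-1}v_k$ in $G$; its colour sequence under $\varphi$ is $\varphi_V(v_1)\,\beta\,\varphi_V(v_2)\,\beta\cdots\beta\,\varphi_V(v_k)$. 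Since $v_1v_2\cdots v_k$ is a path in $G$, the sequence $A=\varphi_V(v_1)\varphi_V(v_2)\cdots\varphi_V(v_k)$ is nonrepetitive. Now apply Lemma~\ref{1} with this $A$ split into the one-term blocks $A_{i,i}$, with each inserted sequence $B^i$ equal to the single-symbol (hence trivially nonrepetitive) sequence $\beta$, and with $\mathbb B=\{\beta\}$; because $\mathbb A\cap\mathbb B=\emptyset$, Lemma~\ref{1} gives that the colour sequence of $P$ is nonrepetitive. Hence $\varphi$ is a weak total Thue colouring with $\pi(G)+1$ colours.

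The bound $\pi_{T_w}(G)\le\pi'(G)+1$ follows by the symmetric construction: fix a nonrepetitive edge-colouring $\varphi_E$ of $G$ on a colour set $\mathbb A'$ with $|\mathbb A'|=\pi'(G)$, pick $\beta'\notin\mathbb A'$, colour each edge $e$ by $\varphi_E(e)$ and each vertex by $\beta'$. For a path $P=v_1e_1v_2\cdots e_{k-1}v_k$, the edges $e_1,\dots,e_{k-1}$ form a path in the line graph of $G$, so $A=\varphi_E(e_1)\cdots\varphi_E(e_{k-1})$ is nonrepetitive; inserting the single vertex colour $\beta'$ before, between, and after these edge colours and invoking Lemma~\ref{1} again (now with $\mathbb B=\{\beta'\}$, $\mathbb A'\cap\mathbb B=\emptyset$) shows the colour sequence of $P$ is nonrepetitive. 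Taking the better of the two constructions yields $\pi_{T_w}(G)\le\min\{\pi(G),\pi'(G)\}+1$.

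There is essentially no obstacle in this argument; the only point requiring care is lining up the statement of Lemma~\ref{1} with the situation, i.e. ensuring the extra colour is genuinely disjoint from the base palette and recalling that a one-term sequence counts as a nonrepetitive $B^i$. (In the vertex case one must resist using Corollary~\ref{2} directly, since the vertex-colour sequence along a path need not be rainbow; Lemma~\ref{1} is the right tool. Corollary~\ref{2} suffices only when the base sequence happens to be rainbow.) Everything else is immediate, and the two halves are mirror images of one another once one notes that in a path the roles of vertices and edges are interchangeable for this insertion lemma.
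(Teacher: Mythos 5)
Your proof is correct and is essentially the paper's own argument: colour all edges (resp.\ all vertices) with a single fresh colour on top of an optimal nonrepetitive vertex- (resp.\ edge-) colouring and invoke the insertion lemma. You are also right that Lemma~\ref{1} with singleton blocks $B^i$ is the proper tool here rather than Corollary~\ref{2} (which the paper cites, although its hypothesis that $A$ be rainbow is not needed and not satisfied in general); the only microscopic point left implicit is that the path's colour sequence begins and ends with an $A$-symbol, which is handled by noting that a factor of a nonrepetitive sequence is nonrepetitive.
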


Moreover, the upper bound in Observation \ref{obe} is tight. To see this it is enough to consider an arbitrary star $K_{1,n}$ and its nonrepetitive colouring. Obviously $\pi( K_{1,n})=2$,  $\pi'( K_{1,n})=n$, for $n\geq 2$ $\pi_{T_w}( K_{1,n})= \min\{\pi( K_{1,n}), \pi'(K_{1,n})\}+1=3$ (as there exists no nonrepetitive sequence of length $4$ over a $2$ symbol alphabet) and $\pi_{T_w}( K_{1,1})=2= \min\{\pi( K_{1,1}), \pi'( K_{1,1})\}+1$. \\
As every total Thue colouring is also a weak total Thue colouring and in a total Thue colouring both, the edge-colouring and the vertex-colouring of the graph have to be nonrepetitive, we have:

\begin{Observation}\label{one}
$\pi_{T_w}\leq \pi_T$,
$\pi(G)\leq\pi_T$  and
$\pi'(G)\leq\pi_T$.
\end{Observation}

On the other hand, if we colour all vertices of the graph $G$ nonrepetitively with $\pi(G)$ colours and we use another $\pi'(G)$ colours to colour all edges of $G$ nonrepetitively, by Lemma~\ref{1} we obtain a total Thue colouring of $G$. Hence, the following is true:

\begin{Observation}\label{3}
$\max\{\pi'(G), \pi(G)\}\leq\pi_T\leq\pi(G)+\pi'(G)$.
\end{Observation}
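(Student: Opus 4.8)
The plan is to prove the two inequalities separately. The lower one is essentially a restatement of Observation~\ref{one}, while the upper one makes precise the informal argument sketched just before the statement, the key tool being Lemma~\ref{1}.

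\emph{Lower bound.} If $\varphi$ is any total Thue colouring of $G$, then by definition the vertex-colouring it induces is a nonrepetitive vertex-colouring of $G$ and the edge-colouring it induces is a nonrepetitive edge-colouring of $G$. Hence $\varphi$ uses at least $\pi(G)$ colours on $V(G)$ and at least $\pi'(G)$ colours on $E(G)$, so at least $\max\{\pi(G),\pi'(G)\}$ colours altogether; this is precisely Observation~\ref{one}.

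\emph{Upper bound.} I would fix a nonrepetitive vertex-colouring $\varphi_V$ of $G$ with a colour set $\mathbb{A}$, $|\mathbb{A}|=\pi(G)$, and a nonrepetitive edge-colouring $\varphi_E$ of $G$ with a disjoint colour set $\mathbb{B}$, $|\mathbb{B}|=\pi'(G)$, and let $\varphi$ be the colouring of $V(G)\cup E(G)$ that agrees with $\varphi_V$ on the vertices and with $\varphi_E$ on the edges. It uses $\pi(G)+\pi'(G)$ colours, and the vertex- and edge-colourings it induces are exactly $\varphi_V$ and $\varphi_E$, hence nonrepetitive; so it only remains to check that for every path $P=v_1e_1v_2e_2\cdots e_{n-1}v_n$ of $G$ (with $e_i=v_iv_{i+1}$) the colour sequence
\[
S=\varphi(v_1)\,\varphi(e_1)\,\varphi(v_2)\,\varphi(e_2)\cdots\varphi(e_{n-1})\,\varphi(v_n)
\]
is nonrepetitive. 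For $n\le 2$ this is immediate, since $S$ has length at most $3$ and any two consecutive terms are one vertex colour and one edge colour, which differ. For $n\ge 3$ I would apply Lemma~\ref{1} with $A=\varphi(e_1)\varphi(e_2)\cdots\varphi(e_{n-1})$, which is nonrepetitive because $e_1,\dots,e_{n-1}$ are the consecutive edges of a simple path and $\varphi_E$ is nonrepetitive, and with the one-element sequences $B^i=\varphi(v_{i+1})$ for $0\le i\le n-1$, each trivially nonrepetitive and lying in $\mathbb{A}$. Splitting $A$ into its singleton blocks $A_{i,i}=\varphi(e_i)$, the sequence produced by Lemma~\ref{1} is exactly $S$, and since $\mathbb{A}\cap\mathbb{B}=\emptyset$ the lemma yields that $S$ is nonrepetitive. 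Thus $\varphi$ is a total Thue colouring of $G$ and $\pi_T(G)\le\pi(G)+\pi'(G)$.

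Neither inequality is deep, so the only point one really has to get right is the bookkeeping in the application of Lemma~\ref{1}: the colour sequence of a path begins and ends with a \emph{vertex} colour, so one should take the edge colours as the ``backbone'' sequence $A$ and insert the vertex colours as the singleton blocks $B^0,\dots,B^{n-1}$ (and not the other way round, which would leave $A$ not matching the ends of $S$), while the short paths $n\le 2$ must be dealt with separately because there the backbone $A$ is empty.
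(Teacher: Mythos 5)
Your proposal is correct and follows the same route as the paper: the lower bound is exactly Observation~\ref{one}, and the upper bound is obtained by juxtaposing a nonrepetitive vertex-colouring and a nonrepetitive edge-colouring on disjoint colour sets and invoking Lemma~\ref{1} (the paper states this in one sentence; you merely spell out the bookkeeping of which sequence plays the role of $A$ and which the $B^i$). The separate treatment of paths on at most two vertices is harmless but not needed, since Lemma~\ref{1} already covers the case of a single-edge backbone.
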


For $n\geq 2$ we have $\pi_{T_w}( K_{1,n})=3$, but $\max\{\pi'(G), \pi(G)\}=n\leq\pi_{T}(K_{1,n})$, which gives the next observation.

\begin{Observation}
The difference between $\pi_{T_w}$ and $\pi_T$  can be arbitrarily large.
\end{Observation}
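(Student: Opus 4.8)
The plan is to produce a single infinite family of graphs on which $\pi_T$ grows without bound while $\pi_{T_w}$ stays fixed; the stars $K_{1,n}$, already invoked above to witness tightness of Observation~\ref{obe}, are the obvious candidates. Concretely, I will show that $\pi_{T_w}(K_{1,n})=3$ for all $n\ge 2$, whereas $\pi_T(K_{1,n})\ge n$, so that the difference $\pi_T(K_{1,n})-\pi_{T_w}(K_{1,n})\ge n-3$ is unbounded.

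First I would pin down $\pi_{T_w}(K_{1,n})=3$ for $n\ge 2$. The upper bound is immediate from Observation~\ref{obe} together with $\pi(K_{1,n})=2$; explicitly one may colour the centre with $1$, every edge with $2$, and every leaf with $3$, and then observe that the only paths in $K_{1,n}$ have at most three vertices, so the colour sequence of any path is, up to symmetry, one of $1,2,3$, or $3,2,1$, or $3,2,1,2,3$, none of which is repetitive. For the lower bound $\pi_{T_w}(K_{1,n})\ge 3$ I would use that, since $n\ge 2$, the star contains a path on three vertices and two edges, whose colour sequence has length $5$; as every word of length at least $4$ over a two-letter alphabet contains a square, two colours cannot suffice.

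Next I would bound $\pi_T(K_{1,n})$ from below. A nonrepetitive edge-colouring is in particular proper — two consecutive edges of a path must receive different colours, else that two-edge path yields a square — and all $n$ edges of $K_{1,n}$ are pairwise adjacent, so $\pi'(K_{1,n})=n$. By Observation~\ref{one} (equivalently, by Observation~\ref{3}) we get $\pi_T(K_{1,n})\ge \pi'(K_{1,n})=n$. Combining this with the previous paragraph gives $\pi_T(K_{1,n})-\pi_{T_w}(K_{1,n})\ge n-3\to\infty$, which proves the claim. There is no real obstacle in this argument: the only point requiring a small (and entirely elementary) verification is that a two-letter word of length $5$ is necessarily repetitive, which underpins the lower bound on $\pi_{T_w}(K_{1,n})$; every other step is read off directly from the observations already established.
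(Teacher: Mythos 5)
Your proposal is correct and follows essentially the same route as the paper: the paper also takes the stars $K_{1,n}$, notes $\pi_{T_w}(K_{1,n})=3$ for $n\ge 2$ (via Observation~\ref{obe} and the nonexistence of nonrepetitive binary words of length $4$) and $\pi_T(K_{1,n})\ge\max\{\pi(K_{1,n}),\pi'(K_{1,n})\}=n$, so the difference is unbounded. You merely spell out the verifications in more detail than the paper does.
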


Weak total Thue colourings are closely related to nonrepetitive vertex-colourings of subdivided graphs as there is an easy 1-1 correspondence between weak total Thue colourings of a graph $G$ and nonrepetitive vertex-colourings of the graph $\tilde G$ which is obtained from $G$ by subdividing every edge. Hence, we have:

\begin{Observation}\label{sd}
If $\tilde G$ is the graph obtained from $G$ by subdividing every edge then: $\pi_{T_w}(G)=\pi(\tilde G)$
\end{Observation}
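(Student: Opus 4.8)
The plan is to establish the bijection explicitly and then verify that it preserves nonrepetitiveness in both directions. First I would set up notation: given $G$, let $\tilde G$ be obtained by replacing each edge $uv \in E(G)$ with a path $u\,w_{uv}\,v$, where $w_{uv}$ is a new subdivision vertex. A weak total Thue colouring $\varphi$ of $G$ assigns colours to $V(G) \cup E(G)$; I would define the corresponding vertex colouring $\tilde\varphi$ of $\tilde G$ by $\tilde\varphi(v) = \varphi(v)$ for $v \in V(G)$ and $\tilde\varphi(w_{uv}) = \varphi(uv)$ for each subdivision vertex. This map is clearly a bijection between colourings of $G$ (of the total type) and vertex colourings of $\tilde G$, using the same colour set, so it suffices to show $\varphi$ is a weak total Thue colouring if and only if $\tilde\varphi$ is a nonrepetitive vertex colouring.

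The key observation making this work is the correspondence between paths. A path $P$ in $G$ traversing vertices $v_1, v_2, \dots, v_k$ corresponds to the path $\tilde P$ in $\tilde G$ traversing $v_1, w_{v_1 v_2}, v_2, w_{v_2 v_3}, v_3, \dots, v_k$, and the colour sequence of consecutive vertices and edges of $P$ under $\varphi$ is exactly the colour sequence of the vertices of $\tilde P$ under $\tilde\varphi$. Conversely, every simple path in $\tilde G$ is either entirely contained in one subdivided edge (length at most $2$, hence its colour sequence is trivially too short to be a repetition — a repetition needs even length at least $2$, and a length-$2$ repetition would need two equal consecutive colours, which for a path $v\,w_{uv}$ cannot arise from the ``no edge coloured $c$ has endvertex coloured $c$'' type constraint — actually here we must be careful, since weak total Thue colourings need not be proper) or it extends through original vertices and thus arises from a walk in $G$. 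I would note that since $P$ is required to be a \emph{simple} path in $G$, the corresponding $\tilde P$ is a simple path in $\tilde G$, and vice versa every simple path in $\tilde G$ that starts and ends appropriately projects to a simple path (or a short segment) in $G$.

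The main step is then purely about matching up the nonrepetitiveness conditions: $\varphi$ is a weak total Thue colouring of $G$ means every simple path's interleaved vertex-edge colour sequence is nonrepetitive, and under the path correspondence this is precisely the statement that every simple path of $\tilde G$ has nonrepetitive vertex-colour sequence, i.e. $\tilde\varphi$ is a nonrepetitive vertex colouring of $\tilde G$. One has to check the edge cases: paths in $\tilde G$ of length $0$ or $1$ are automatically nonrepetitive, and paths of length $2$ within a single subdivided edge correspond to a vertex, its incident edge, and the other endvertex — the sequence $\varphi(u)\,\varphi(uv)\,\varphi(v)$; this is a repetition only if $\varphi(u) = \varphi(uv)$, but a length-$2$ repetition $r_1 r_2$ with $r_1 = r_2$ means a single symbol repeated, and here the relevant subpath has $3$ terms so the repetition would be a $2$-term block; the condition rules this out once we observe every such subpath embeds into a longer simple path whenever $G$ has no isolated edges, and for $K_2$ one checks directly. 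I expect the only real obstacle is bookkeeping these degenerate short-path cases and confirming that the definition of weak total Thue colouring (which, unlike the strong version, permits repeated colours on adjacent vertices/edges) still forces every \emph{path} colour sequence — including the shortest nontrivial ones — to avoid being a repetition; once that is pinned down, the equivalence is immediate.
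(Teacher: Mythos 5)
Your approach is the same as the paper's: the paper gives no proof beyond asserting ``an easy 1--1 correspondence'' between weak total Thue colourings of $G$ and nonrepetitive vertex-colourings of $\tilde G$, and your proposal is a fleshed-out version of exactly that correspondence. The direction you treat carefully is fine: a simple path of $G$ maps to a simple path of $\tilde G$ carrying the interleaved vertex--edge colour sequence, which gives $\pi_{T_w}(G)\le\pi(\tilde G)$ at once; and your worry about length-$2$ repetitions is resolved more simply than you suggest, since the one-edge path already has colour sequence $\varphi(u)\,\varphi(uv)\,\varphi(v)$ and nonrepetitiveness of that three-term sequence forbids $\varphi(u)=\varphi(uv)$ directly, with no need to embed into a longer path or to treat $K_2$ separately.

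The gap sits exactly at your clause ``every simple path in $\tilde G$ that starts and ends appropriately projects to a simple path \dots in $G$''. A repetitively coloured subpath of $\tilde G$ has an even number of vertices, hence (by the bipartition of $\tilde G$) exactly one endpoint is a subdivision vertex; it projects to a sequence $v_1,e_1,v_2,\dots,v_n,e_n$ of $G$ ending in an edge. If the second endvertex $u$ of $e_n$ already occurs among $v_1,\dots,v_n$, this sequence is not a block of the colour sequence of any vertex-to-vertex simple path of $G$, so the hypothesis on $\varphi$ does not obviously exclude it. Concretely, colour $K_3$ by $\varphi(v_1)=a$, $\varphi(v_1v_2)=b$, $\varphi(v_2)=c$, $\varphi(v_2v_3)=a$, $\varphi(v_3)=b$, $\varphi(v_3v_1)=c$: one checks that every vertex-to-vertex simple path of $K_3$ receives a nonrepetitive sequence, yet the Hamiltonian path $v_1,w_{12},v_2,w_{23},v_3,w_{31}$ of $\tilde K_3=C_6$ is coloured $abcabc$, a repetition. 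So the colouring-level bijection you rely on fails unless ``path'' in the definition of a weak total Thue colouring is read as also allowing paths that begin or end with an edge --- under that reading the observation is essentially true by definition and your argument closes; under the strict reading you only get one inequality and $\pi_{T_w}(G)\ge\pi(\tilde G)$ needs a separate argument. You should pin down which reading you use; the paper itself does not address this point either.
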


Together with Currie's result on nonrepetitive colourings of cycles this immediately implies the following:

\begin{Corollary}
For the cycle $C_n$ on $n$ vertices it holds $\pi_{T_w}(C_n)=4$ if $n=5$ or $7$. Otherwise $\pi_{T_w}(C_n)=3$.
\end{Corollary}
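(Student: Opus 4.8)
The plan is to reduce the statement directly to Currie's theorem on the Thue chromatic number of cycles by means of Observation~\ref{sd}. First I would note that subdividing every edge of $C_n$ produces the cycle $C_{2n}$: each of the $n$ edges $v_iv_{i+1}$ is replaced by a path $v_i\,w_i\,v_{i+1}$, and the resulting graph is $2$-regular, connected, and has $2n$ vertices, hence it is a cycle of length $2n$. Consequently $\widetilde{C_n}=C_{2n}$, and Observation~\ref{sd} gives $\pi_{T_w}(C_n)=\pi(\widetilde{C_n})=\pi(C_{2n})$.

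Next I would invoke Currie's result quoted in the introduction: for a cycle of length $m\ge 3$ one has $\pi(C_m)=4$ exactly when $m\in\{5,7,9,10,14,17\}$, and $\pi(C_m)=3$ otherwise. Since for $n\ge 3$ the integer $2n$ is even and at least $6$, the only way it can lie in the exceptional set $\{5,7,9,10,14,17\}$ is $2n=10$ or $2n=14$, that is, $n=5$ or $n=7$; in those two cases $\pi(C_{2n})=4$. For every other $n\ge 3$, $2n$ is an even integer different from $10$ and $14$, hence not in the exceptional set, so $\pi(C_{2n})=3$. Combining these evaluations with the identity $\pi_{T_w}(C_n)=\pi(C_{2n})$ yields the claimed values.

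The argument is essentially immediate once Observation~\ref{sd} is available, so there is no genuine obstacle; the only points that need (minimal) care are checking that the edge subdivision of $C_n$ is precisely $C_{2n}$, and observing that the parity of $2n$ rules out the odd members $5,7,9,17$ of Currie's exceptional set, leaving exactly $n=5$ and $n=7$ as the cases with value $4$.
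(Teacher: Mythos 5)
Your proposal is correct and follows exactly the route the paper intends: it derives the corollary from Observation~\ref{sd} (noting $\widetilde{C_n}=C_{2n}$) together with Currie's theorem, with the parity observation isolating $2n\in\{10,14\}$, i.e.\ $n\in\{5,7\}$, as the exceptional cases. The paper states this as an immediate consequence without writing out the details, so your version simply makes the same argument explicit.
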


\begin{Observation}
There exists a graph where $\pi_T=\pi_{T_w}=\pi(G)=\pi'(G)$ .
\end{Observation}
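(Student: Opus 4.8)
The plan is to produce a single small graph on which all four invariants can be evaluated directly, and the natural candidate is the triangle $G=K_3=C_3$. I claim that $\pi_T(K_3)=\pi_{T_w}(K_3)=\pi(K_3)=\pi'(K_3)=3$, and I would organise the verification into four short steps, only one of which requires an actual computation.

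First I would dispose of $\pi$ and $\pi'$. Every simple path in $K_3$ has at most three vertices, so the colour word read along any path has length at most three, and a word of length at most three is a repetition only when it has length two with its two letters equal. Consequently a vertex-colouring (respectively an edge-colouring) of $K_3$ is nonrepetitive precisely when it is proper; since $K_3$ is a clique and its three edges are pairwise adjacent, this gives $\pi(K_3)=\chi(K_3)=3$ and $\pi'(K_3)=\chi'(K_3)=3$. Second, $\pi_{T_w}(K_3)=3$ follows at once from the corollary on cycles proved just above, because $3\notin\{5,7\}$; equivalently, by Observation~\ref{sd} one has $\pi_{T_w}(C_3)=\pi(C_6)=3$, since subdividing every edge of $C_3$ yields $C_6$.

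Third, for $\pi_T(K_3)$ the lower bound $\pi_T(K_3)\ge\pi(K_3)=3$ is immediate from Observation~\ref{one}, and for the matching upper bound I would exhibit an explicit total Thue colouring with three colours: colour the vertices $v_1,v_2,v_3$ by $1,2,3$ and the edges $v_1v_2,v_2v_3,v_3v_1$ by $3,1,2$ (this is in fact the only proper total $3$-colouring of $K_3$ up to symmetry). One checks that it is a proper total colouring, that the induced vertex- and edge-colourings are rainbow along every path and hence nonrepetitive, and that the colour words of the three Hamiltonian paths, namely $1,3,2,1,3$, $2,1,3,2,1$ and $3,2,1,3,2$, are square-free: no two consecutive letters coincide and no factor has the shape $xyxy$. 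Since every path word of $K_3$ is a factor of one of these three words (up to reversal), the colouring is nonrepetitive, so $\pi_T(K_3)\le 3$ and therefore $\pi_T(K_3)=3$. Putting the four steps together gives $\pi_T(K_3)=\pi_{T_w}(K_3)=\pi(K_3)=\pi'(K_3)=3$.

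The whole argument is elementary; the only real computation is the square-freeness check in the third step, and even that reduces to a single length-five word because the three Hamiltonian-path words are obtained from one another by a colour permutation induced by the rotational symmetry of $C_3$, which preserves square-freeness. The only point requiring mild care is to choose the edge colours so that properness of the total colouring and square-freeness of the path words hold simultaneously; the cyclic pattern described above was selected for exactly this reason, so I do not anticipate any genuine obstacle.
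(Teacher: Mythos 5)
Your proposal is correct, but it uses a different witness from the paper. The paper takes $G=C_5$: it has already been noted that $\pi(C_5)=\pi'(C_5)=\pi_{T_w}(C_5)=4$ (via Currie's theorem and Observation~\ref{sd}), and the common value $4$ for $\pi_T(C_5)$ is then certified by an explicit $4$-colouring displayed in Figure~\ref{C5}. You instead take $G=K_3=C_3$ with common value $3$, and your verification is entirely self-contained: $\pi=\pi'=3$ because every path in $K_3$ has at most three vertices and two edges, so nonrepetitive reduces to proper; $\pi_{T_w}(C_3)=\pi(C_6)=3$ by Observation~\ref{sd}; and the (forced) proper total $3$-colouring of $K_3$ is checked to be a total Thue colouring because the longest vertex--edge colour word is the square-free $13213$ and its rotations/reversals. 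Both arguments are sound; yours has the minor advantages of not relying on a figure and of realizing the smallest possible common value (note $\pi_T\ge\pi'\ge 3$ for any graph with two adjacent edges, and for edgeless or single-edge graphs $\pi'$ degenerates, so $3$ is optimal here), while the paper's choice of $C_5$ additionally illustrates a graph where all four parameters exceed the generic value $3$ for cycles. One cosmetic caution: the lower-bound argument in Theorem~\ref{path} showing $\pi_T(P)\ge 4$ for paths does not conflict with your claim $\pi_T(K_3)=3$, since it requires a path on four vertices, which $K_3$ does not contain; it would be worth saying this explicitly so the reader does not suspect an inconsistency.
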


From the above considerations it already follows that $\pi(C_{5})=\pi'(C_{5})=\pi_{T_w}(C_5)=4$. Figure~ $\ref{C5}$ shows, that 4 colours are also enough for a total Thue colouring of the cycle $C_5$.

\begin{figure}[h]
\begin{center}
\includegraphics[width=3.3cm]{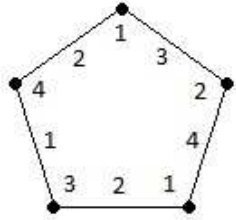}
\end{center}
\caption{Strong total Thue colouring of $C_5$}\label{C5}
\end{figure}

\section{Some general results}


\begin{Theorem}\label{subdiv}
Let $G=G(V,E)$ be a graph; $|V(G)|=n$, $|E(G)|=m$. Then $\pi_{T_w}\leq m - n+5$.
\end{Theorem}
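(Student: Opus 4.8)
The plan is to build the colouring on a spanning tree. First I would assume that $G$ is connected: a path of $G$ never leaves one component, so in general one colours each component separately, and in any case the quantity $m-n+5$ only makes sense once the graph has at least $n-1$ edges (so some mild hypothesis such as connectedness is implicitly in force). For connected $G$, fix a spanning tree $T$ of $G$. It has exactly $n-1$ edges, hence $|E(G)\setminus E(T)|=m-n+1$.

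Next I would colour $T$ by a weak total Thue colouring using at most $4$ colours. This is available from Observation~\ref{sd}: $\pi_{T_w}(T)=\pi(\tilde T)$, where $\tilde T$ is obtained from $T$ by subdividing every edge, and $\tilde T$ is again a tree, so $\pi(\tilde T)\le 4$ by the well-known bound that every tree has Thue chromatic number at most $4$. Call this colouring $\psi$, with colour set $\{1,2,3,4\}$, and extend $\psi$ to a colouring $\varphi$ of all of $G$ by handing each of the $m-n+1$ edges of $E(G)\setminus E(T)$ a private brand-new colour, all of them pairwise distinct and distinct from $1,2,3,4$. Then $\varphi$ uses $4+(m-n+1)=m-n+5$ colours, and it remains to check that $\varphi$ is a weak total Thue colouring of $G$.

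For the verification, take any path $P$ of $G$ and suppose for contradiction that its colour sequence under $\varphi$ contains, as a block of consecutive terms, a repetition $r_k r_{k+1}\dots r_{k+2t-1}$. Each new colour is carried by exactly one element of $G$ (one non-tree edge) and hence, since $P$ is simple, occurs at most once in the whole colour sequence of $P$; but in a repetition of length $2t$ every term occurs at least twice inside the block. Therefore no term of the block is a new colour, i.e.\ every edge of the sub-path of $P$ corresponding to this block lies in $T$. That sub-path, together with one extra endvertex prepended/appended if the block happens to begin or end in the middle of an edge, is an honest path of $T$ (it is simple because $P$ is, and all its vertices and edges lie in $T$), and $\varphi$ agrees with $\psi$ on it; so the block is a sub-block of the colour sequence of a path of $T$ under $\psi$, contradicting that $\psi$ is a weak total Thue colouring of $T$. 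Hence $\varphi$ admits no repetitive path and $\pi_{T_w}(G)\le m-n+5$.

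I expect the only genuinely delicate step to be this last one — more precisely, the bookkeeping that a block of $P$'s colour sequence using only tree edges really embeds into the colour sequence of a bona fide path of $T$ (handling the cases where the block starts or ends on an edge, and checking the prepended/appended vertex does not already occur in the block). The remaining ingredients — the count $|E(G)\setminus E(T)|=m-n+1$, the bound $\pi(\tilde T)\le 4$, and the observation that a simple path meets each edge at most once — are routine. It is also worth recording explicitly the convention used for disconnected $G$, since otherwise the bound can fail numerically (e.g.\ for a graph with many isolated vertices $m-n+5$ may be negative).
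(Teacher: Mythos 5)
Your proof is correct and follows exactly the paper's argument: take a spanning tree $T$, give it a weak total Thue colouring with $4$ colours via Observation~\ref{sd} and the bound $\pi(\tilde T)\le 4$ for trees, and assign the remaining $m-n+1$ edges pairwise distinct new colours. The paper dismisses the final verification with ``all paths obviously remain nonrepetitive,'' so your explicit check of that step --- and your remark that connectedness must be assumed, without which the stated bound can fail (e.g.\ for $P_4$ together with two isolated vertices) --- only adds rigour to the same approach.
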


\begin{proof}
Consider a spanning tree $T$ of $G$. If every edge of $T$ is subdivided by one vertex the tree remains a tree and therefore has a Thue chromatic number equal to 4 (see \cite{BGKNP07}). Hence (by Observation~\ref{sd}), there is a weak total Thue colouring of $T$ using 4 colours. If the remaining $m-n+1$ edges of $G$ are coloured by different colours, all paths obviously remain nonrepetitive. Hence, $\pi_{T_w}(G)\le m-n+5$.
\end{proof}

\begin{Corollary}\label{planar}
Let $G=G(V,E,F)$ be a plane graph; $|F(G)|=k$. Then $\pi_{T_w}\leq 3 + k$.
\end{Corollary}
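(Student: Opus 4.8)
The plan is to derive this directly from Theorem~\ref{subdiv} using Euler's formula. Recall that Theorem~\ref{subdiv} gives $\pi_{T_w}(G) \le m - n + 5$ for any graph $G$ with $n$ vertices and $m$ edges, so I only need to express $m-n$ in terms of the number of faces $k$ for a plane graph.

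First I would dispose of the disconnected case or simply assume $G$ is connected, since if $G$ has components we can handle the dominant term componentwise, or note that the bound only improves; the cleanest route is to observe that the statement is really about connected plane graphs (a plane embedding with a well-defined face count) and reduce to that. For a connected plane graph, Euler's formula states $n - m + k = 2$, hence $m - n = k - 2$. Substituting into the bound of Theorem~\ref{subdiv} yields $\pi_{T_w}(G) \le (m-n) + 5 = (k-2) + 5 = k + 3$, which is exactly the claimed inequality $\pi_{T_w} \le 3 + k$.

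There is essentially no obstacle here; the only point requiring a word of care is whether Euler's formula is being applied in the form the reader expects. If $G$ is not connected, say with $c$ components, the correct version is $n - m + k = 1 + c \ge 2$, which gives $m - n = k - 1 - c \le k - 2$, so the bound $\pi_{T_w} \le m - n + 5 \le k + 3$ still holds, even with room to spare. Thus the corollary follows immediately, and I would present it as a one-line consequence: combine Theorem~\ref{subdiv} with Euler's formula $n - m + k \ge 2$ to get $m - n + 5 \le k + 3$.
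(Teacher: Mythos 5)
Your proof is correct and follows exactly the paper's argument: substitute Euler's formula $n-m+k=2$ into the bound $\pi_{T_w}\le m-n+5$ of Theorem~\ref{subdiv}. The extra remark about disconnected graphs is a harmless refinement the paper omits.
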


\begin{proof}
By the Euler formula for every plane graph $G=G(V,E,F)$ with $|V(G)|=n$, $|E(G)|=m$,  $|F(G)|=k$  it holds $n+k=m+2$. Then from Theorem~\ref{subdiv} it follows that $\pi_{T_w}\leq 3 + k$.
\end{proof}

\begin{Theorem}\label{outerplanar}
Let $G$ be an outerplanar graph on $n$ vertices.
Then for $n\in\{1,2,3\}$:  $\pi_{T_w}=n$, \  for $n=4$:   $\pi_{T_w}=n-1$ \
and for $n>4$: $\pi_{T_w}\leq \min\{13, n+1\}$.
\end{Theorem}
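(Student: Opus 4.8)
The plan is to split the statement according to the value of $n$ and to reduce everything to the subdivision viewpoint provided by Observation~\ref{sd}, i.e.\ to the nonrepetitive vertex-chromatic number $\pi(\tilde G)$ of the graph $\tilde G$ obtained from $G$ by subdividing each edge once. For the small cases $n\in\{1,2,3\}$ one checks directly: a path or triangle on $n$ vertices has so few vertices and edges that on the longest path a repetition of length $4$ (two equal consecutive pairs) cannot be avoided with fewer than $n$ colours, while $n$ colours trivially suffice; equality $\pi_{T_w}=n$ follows. For $n=4$ the outerplanar graphs are subgraphs of the fan/maximal outerplanar graph on $4$ vertices; here one exhibits an explicit weak total Thue colouring with $3$ colours (equivalently a nonrepetitive vertex-colouring of $\tilde G$, which for $n=4$ is a graph on at most $4+5=9$ vertices, small enough to handle by hand or by the structure of its longest paths), and shows $2$ colours are impossible because some path has length $\ge 4$. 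I expect these base cases to be short casework.

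The substance is the case $n>4$, where we must prove $\pi_{T_w}(G)\le\min\{13,n+1\}$. The bound $n+1$ I would get from Observation~\ref{obe}: outerplanar graphs satisfy $\pi(G)\le n$ trivially, but in fact one can do better — since $G$ is outerplanar it has a vertex of degree $\le 2$, and a standard inductive argument (remove a low-degree vertex, colour, put it back) gives a nonrepetitive vertex-colouring with few colours; in any case $\pi(G)\le n$ is immediate, so $\pi_{T_w}\le\pi(G)+1\le n+1$, and for small $n$ (namely $5\le n\le 12$) this already beats $13$. The constant bound $13$ for all larger $n$ is the real target. The approach is to use the known result (Bar\'at–Varj\'u / K\"undgen–Pelsmajer type bounds on nonrepetitive colourings of graphs of bounded treewidth) that outerplanar graphs have bounded $\pi$; more precisely, outerplanar graphs have treewidth $\le 2$, and graphs of treewidth $t$ have $\pi$ bounded by a function of $t$. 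Subdividing every edge of an outerplanar graph yields again an outerplanar graph (treewidth stays $\le 2$), so $\pi(\tilde G)$ is bounded by the same constant; the job is to show this constant can be taken to be $13$.

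The key step — and the main obstacle — is therefore a careful bookkeeping argument, not a citation: I would take a fixed planar (outerplanar) embedding of $G$, order the internal faces of $\tilde G$ in a way compatible with an ear-decomposition of the $2$-connected blocks, and colour $\tilde G$ face by face (block by block, ear by ear), at each stage invoking Lemma~\ref{1} to splice the new vertices' colours into the already-coloured paths without creating a repetition. Each ear adds a path whose two endpoints are already coloured; because in $\tilde G$ every original vertex is separated from the next by a subdivision vertex, the segments that must be made nonrepetitive are short, and one checks that a fixed palette — a few colours reserved for "join'' vertices and a disjoint handful reused on the interiors of ears, in the spirit of Corollary~\ref{2} — keeps every path of $\tilde G$ nonrepetitive. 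Counting the colours needed for the endpoints of ears (bounded because treewidth is $2$) plus those needed to colour ear-interiors nonrepetitively (a constant, by the path/cycle bounds of Thue and Currie together with Lemma~\ref{1}) gives the value $13$. The delicate point is verifying that a path of $G$ crossing several ears does not pick up a repetition from the reused interior colours; this is exactly where Lemma~\ref{1} (with $\mathbb{A}$ the join-colours along the path and $\mathbb{B}$ the interior colours) is applied, and making the two colour sets genuinely disjoint on each relevant path is what forces the final constant up from the treewidth bound to $13$.
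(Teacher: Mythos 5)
Your treatment of the small cases agrees in substance with the paper: for $n\in\{1,2,3\}$ the paper likewise reduces via Observation~\ref{sd} to $\pi(K_1)=1$, $\pi$ of a subdivided edge $=2$, and $\pi(C_6)=3$ (Currie), and for $n=4$ it also observes that every outerplanar graph on four vertices is a subgraph of the diamond $K_4-e$, exhibits an explicit weak total Thue colouring with $3$ colours, and obtains the lower bound $3$ from the subdivision of a two-edge path contained in any spanning tree. Your derivation of the bound $n+1$ from $\pi(G)\le n$ and Observation~\ref{obe} is exactly the paper's.

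The gap is in the constant bound $13$ for $n>4$. The paper disposes of it in one line: by Bar\'at and Varj\'u every outerplanar graph admits a nonrepetitive vertex-colouring with $12$ colours, and Observation~\ref{obe} (equivalently Corollary~\ref{2}: give all edges one fresh common colour) then yields $\pi_{T_w}(G)\le 12+1=13$. You instead announce an ear-decomposition, face-by-face colouring of $\tilde G$ with a reserved palette of ``join'' colours and reused ``interior'' colours, but you never specify the palette, never carry out the count to $13$, and you explicitly defer the crucial verification --- that a path of $G$ crossing several ears picks up no repetition from the reused interior colours --- calling it ``the delicate point''. That verification is the entire content of such an argument (it is essentially what the Bar\'at--Varj\'u and treewidth-based proofs must do), so as written the case $n>4$ with $n\ge 13$ is not proved. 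Note that you had the fix in hand: you yourself observe that $\tilde G$ is again outerplanar, so citing the $12$-colour theorem for $\tilde G$ and invoking Observation~\ref{sd} gives $\pi_{T_w}(G)=\pi(\tilde G)\le 12$, which is even slightly stronger than the claimed $13$; alternatively, cite it for $G$ and add one edge colour, as the paper does.
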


\begin{proof}
Let $G$ be an outerplanar graph on $n\in\{1,2,3\}$ vertices.  Obviously $\pi(K_1)=1$, $\pi(P_2)=2$ and by Currie's theorem \cite{Cur02} $\pi(C_6)=3$.  One can obtain all these graphs by subdividing each edge of an outerplanar graph on $1$, $2$ or $3$ vertices respectively, by one vertex.  According to Observation~\ref{sd} this gives $\pi_{T_w}=n$ for $n\in\{1,2,3\}$.
\begin{figure}[h]
\begin{center}
\includegraphics[width=3.1cm]{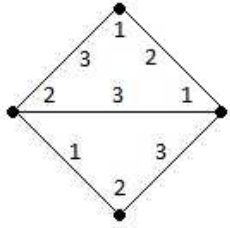}
\end{center}
\caption{A weak total Thue colouring of a diamond graph} \label{diamond}
\end{figure}
Every outerplanar graph $G$ on $4$ vertices is a subgraph of a diamond graph - depicted on Figure~\ref{diamond} together with its weak total Thue colouring using $3$ colours. Hence $\pi_{T_w}\leq 3$. On the other hand every spanning tree of the diamond graph contains as a subgraph a path on $2$ edges. Subdividing each edge of it  by one vertex one can obtain a path $P_4$; $\pi(P_4)=3$, and from  Observation~\ref{sd} it follows that the weak total Thue number of an outerplanar graph $G$ on $4$ vertices is at least $3$. Hence $\pi_{T_w}=n-1$. \\
The general result $\pi_{T_w}(G)\le\min\{13,n+1\}$ follows from Lemma~\ref{1}, Observation~ \ref{obe} and the fact, that every outerplanar graph admits a nonrepetitive vertex-colouring with 12 colours (Bar\'at and Varj\'u \cite{BaVa07}).
\end{proof}

\begin{Theorem}\label{edgetree}
Let $G$ be a graph containing $b$ bridges $e_1,e_2,\dots,e_b$ the removal of which separates $G$ into $b+1$ $2$-edge-connected components $B_1, B_2,\dots,B_{b+1}$.
Then $\pi_{T_w}\leq 4\Delta(T)-4+\max\limits_i\{\pi_{T_w}(B_i)\}$
\end{Theorem}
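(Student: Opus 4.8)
The plan is to decompose $G$ along its bridges and colour the resulting pieces from disjoint palettes. Let $T$ be the tree whose vertices are the $2$-edge-connected components $B_1,\dots,B_{b+1}$ and whose edges are the bridges $e_1,\dots,e_b$, so that $\Delta(T)$ is the largest number of bridges incident with a single component $B_i$. I would colour each $B_i$ separately with a fixed weak total Thue colouring, all of these drawn from one common palette $\mathcal C_B$ of size $M:=\max_i\pi_{T_w}(B_i)$, and colour the bridges from a second palette $\mathcal C_T$, disjoint from $\mathcal C_B$, of size $4\Delta(T)-4$; altogether this uses $4\Delta(T)-4+M$ colours, as claimed.

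The first point to check is that a simple path $P$ of $G$ uses each bridge at most once, so its trace in $T$ is a walk using each edge of the tree at most once, hence (a tree being acyclic) a simple path of $T$. Consequently the colour sequence read along $P$ has the form $W_0\,\varphi(e_{j_1})\,W_1\,\varphi(e_{j_2})\,W_2\cdots\varphi(e_{j_t})\,W_t$, where $e_{j_1},\dots,e_{j_t}$ are the crossed bridges (they form a simple path of $T$) and each $W_\ell$ is the colour sequence of the sub-path of $P$ lying inside a single component, and is therefore nonrepetitive because the restriction of $\varphi$ to that component is a weak total Thue colouring. As $\mathcal C_B\cap\mathcal C_T=\emptyset$, Lemma~\ref{1} --- applied with $A=\varphi(e_{j_1})\cdots\varphi(e_{j_t})$ split into its $t$ one-letter pieces and the $W_\ell$ as the interleaved blocks --- shows this sequence is nonrepetitive as soon as $A$ is. Hence the whole theorem reduces to: every tree $T$ admits a nonrepetitive edge-colouring with $4\Delta(T)-4$ colours.

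To prove that, root $T$ at a leaf, fix a nonrepetitive vertex-colouring $c\colon V(T)\to\{1,2,3,4\}$ (which exists by \cite{BGKNP07}), and give each non-root vertex $v$ an index $\iota(v)\in\{1,\dots,\Delta(T)-1\}$ that distinguishes it among the children of its parent $p(v)$. Colour the edge $p(v)v$ by the pair $(c(p(v)),\iota(v))$; this uses at most $4(\Delta(T)-1)$ colours. Given a path $v_0v_1\cdots v_k$ of $T$ with apex $v_m$ (the vertex of the path closest to the root), a direct computation shows that the sequence of first coordinates of the edge-colours along it is $c(v_1),\dots,c(v_{m-1}),c(v_m),c(v_m),c(v_{m+1}),\dots,c(v_{k-1})$ (with no repeated letter at all when $m\in\{0,k\}$) --- that is, the nonrepetitive vertex-colour word of the sub-path $v_1\cdots v_{k-1}$ with the single letter $c(v_m)$ duplicated. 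Here I would insert the auxiliary claim that duplicating one letter of a square-free word yields a word whose only square is the trivial length-two square at the duplicated spot. Granting it, any repetition in the edge-colour sequence must be exactly that trivial square, and this would force $(c(v_m),\iota(v_{m-1}))=(c(v_m),\iota(v_{m+1}))$ --- impossible, since $v_{m-1}$ and $v_{m+1}$ are distinct children of $v_m$ and hence get distinct indices.

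The step I expect to need the most care is this behaviour at the apex of a path in $T$. With the naive recipe ``colour each edge by the colour of its lower endpoint'' the word read along a path turns out to be the vertex word with the apex colour \emph{deleted}, and deleting a letter from a square-free word can create a square, so that recipe fails; using the \emph{parent's} colour instead makes the word a square-free word with one letter \emph{duplicated}, and the auxiliary claim says the only square this can introduce is the trivial $xx$, which the sibling-index coordinate then destroys. Thus the technical heart of the proof is that auxiliary square-free-word claim, handled by a finite case analysis on the position of a hypothetical nontrivial square relative to the duplicated letter. Everything else --- paths of $G$ tracing paths of $T$, disjointness of the palettes, and the appeal to Lemma~\ref{1} --- is routine, and degenerate cases such as $\Delta(T)\le 1$ or components consisting of a single vertex are dealt with directly.
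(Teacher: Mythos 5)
Your proposal is correct and follows the same route as the paper: contract the $2$-edge-connected components to obtain a tree $T$, colour the bridges by a nonrepetitive edge-colouring of $T$ with $4\Delta(T)-4$ colours, colour each $B_i$ from a disjoint common palette of size $\max_i\{\pi_{T_w}(B_i)\}$, and invoke Lemma~\ref{1} after noting that a simple path of $G$ traces a simple path of $T$. The only divergence is that the paper simply cites \cite{AGHR02} for the bound $\pi'(T)\le 4\Delta(T)-4$ on trees, whereas you reprove it from scratch via the parent-colour/sibling-index pairs; your auxiliary claim that duplicating one letter of a square-free word creates only the trivial length-two square is true, so this self-contained detour is sound, just not required.
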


\begin{proof}
\textbf{The colouring algorithm:} \\
\textbf{1.} Denote $T$ the tree obtained from $G$ by contracting the components $B_1$,...,$B_{b+1}$  into single vertices. \\
\textbf{2.} Colour the edges of $T$ (i.e. $e_1,...,e_b$) nonrepetitively. According to the Theorem proved in \cite{AGHR02}  at most $4\Delta(T)-4$  colours are needed. \\
\textbf{3.} To obtain the weak total Thue colouring of $G$ find a weak total Thue colouring of each component $B_i$ (with colours different from the  colours used to colour the edges of $T$); As there is no repetitive path in each block $B_i$, by Lemma~\ref{1} the colouring obtained by the algorithm described above gives a weak total Thue colouring of $G$ with the claimed number of colours.
\end{proof}

From Thue's theorem and Observation~\ref{sd} it is obvious that the weak total Thue chromatic number of paths on at least 3 vertices is 3. From Lemma~\ref{1} it can be seen, that a total Thue colouring of every path with 6 colours can be constructed by combining a nonrepetitive vertex-colouring on 3 colours and a nonrepetitive edge-colouring on another 3 colours. The following Theorem improves this bound.

\begin{Theorem}\label{path}
For every path $P$ on at least 4 vertices it holds $4\le \pi_T(P)\le 5$.
\end{Theorem}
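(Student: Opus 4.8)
The plan is to prove the two inequalities separately, by a short finite check for the lower bound and an explicit $5$-colouring for the upper bound.

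\textbf{Lower bound.} I would first reduce to the path $P_4$ on four vertices: if $P$ has a total Thue colouring with $c$ colours, then restricting it to any four consecutive vertices gives a total Thue colouring of $P_4$ with at most $c$ colours, because every factor of a nonrepetitive sequence is nonrepetitive, and this holds simultaneously for the total word, the induced vertex word and the induced edge word. So it suffices to show $\pi_T(P_4)\ge 4$. Writing $v_1,v_2,v_3,v_4$ for the vertices and $e_1,e_2,e_3$ for the edges, in any $3$-colouring the vertex word $\varphi(v_1)\varphi(v_2)\varphi(v_3)\varphi(v_4)$ has to be a nonrepetitive ternary word of length $4$; up to permuting the colours there are exactly the three such words with first two letters $1,2$, namely $1213$, $1231$, $1232$ (together with their images under the transposition of the other two colours). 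In each of these cases every edge colour is forced, since $\varphi(e_i)$ must avoid the two (distinct) colours of the endpoints of $e_i$, and a two-line check shows the forced colouring always fails: either the edge word contains a square $cc$, or the total word $\varphi(v_1)\varphi(e_1)\cdots\varphi(v_4)$ contains a square of length $4$ or $6$ (for instance $1231$ forces the total word $1\,3\,2\,1\,3\,2\,1$, which contains $(132)^2$). Hence $\pi_T(P_4)\ge 4$, and so $\pi_T(P)\ge 4$ for every path on at least four vertices.

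\textbf{Upper bound.} For $\pi_T(P)\le 5$ I would exhibit a colouring with colour set $\{1,2,3,4,5\}$ in which the vertices get a square-free word $V=v_1\ldots v_n$ over $\{1,2,3\}$ and the edges get a square-free word $E=e_1\ldots e_{n-1}$ over $\{3,4,5\}$ chosen so that $e_i=3$ implies $v_i\ne 3$ and $v_{i+1}\ne 3$; this last condition, together with $V$ and $E$ being square-free, makes the colouring a proper total colouring and makes the induced vertex- and edge-colourings nonrepetitive by construction. The substantive point is that the total word $w=v_1e_1v_2e_2\ldots e_{n-1}v_n$ is then nonrepetitive. I would argue as follows: number the positions of $w$ by $1,\ldots,2n-1$, so odd positions carry vertex colours (alphabet $\{1,2,3\}$) and even positions carry edge colours (alphabet $\{3,4,5\}$), and take a square $XX$ of smallest length $2k$ in $w$. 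If $k=1$ it consists of two equal consecutive entries of $w$, i.e.\ an edge with the colour of an incident vertex, contradicting properness. If $k$ is even, the two halves of the square lie on positions of equal parity, so reading off the vertex positions of the square gives a square of length $k\ge 2$ inside $V$ (and similarly the edge positions give one inside $E$) --- impossible. If $k\ge 3$ is odd, the two halves lie on positions of opposite parity, so each vertex colour in the square equals an edge colour; hence every colour occurring in the square lies in $\{1,2,3\}\cap\{3,4,5\}=\{3\}$, and in particular $w$ has two consecutive $3$'s, again contradicting properness. So $w$ is nonrepetitive and the colouring is a total Thue colouring.

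\textbf{The main obstacle.} What is left, and what I expect to be the hard part, is to show that a compatible pair $(V,E)$ exists for every $n$. The tension is that any square-free word over a three-letter alphabet uses each letter with density at least $1/4$, so $E$ is forced to use colour $3$ rather often, and each edge coloured $3$ forbids colour $3$ at two consecutive vertices; if two such edges sit at distance $2$ the forbidden vertices form a block of four consecutive positions, on which no square-free completion of $V$ over $\{1,2,3\}$ exists. I would get around this by choosing $E$ to be a square-free word over $\{3,4,5\}$ in which the occurrences of $3$ are pairwise at distance at least $3$ (equivalently, $E$ avoids the factors $343$ and $353$); such words exist and can be produced, for instance, by a suitable block substitution applied to the Thue--Morse word. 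Then the vertices that must avoid colour $3$ come in isolated blocks of size at most two, separated by unrestricted positions, and $V$ can be taken to be a square-free ternary word respecting these few, mild constraints --- i.e.\ a nonrepetitive list-colouring of the path in which all but a sparse, well-separated set of vertices have the full list $\{1,2,3\}$ --- which is obtained by a routine adaptation of Thue's construction. Assembling $V$ and $E$ and invoking the parity argument above then yields $\pi_T(P)\le 5$.
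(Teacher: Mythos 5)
Your lower bound is correct: the reduction to $P_4$ and the finite check over the square-free ternary vertex words $1213$, $1231$, $1232$ (up to renaming) does establish $\pi_T(P_4)\ge 4$. The paper gets the same conclusion more quickly by looking at the total word $\varphi(v_1)\varphi(e_1)\cdots\varphi(e_3)$ and observing that with three colours each entry is forced to be the unique colour differing from the two preceding ones, so the word must be $123123$; but your version is equally valid. Your parity argument for the upper bound is also sound as far as it goes: with vertices coloured from $\{1,2,3\}$, edges from $\{3,4,5\}$, and colour $3$ never on an edge and an incident vertex simultaneously, a hypothetical square of half-length $k$ in the total word is killed for $k=1$ by properness, for $k$ even by projecting onto the vertex (or edge) positions, and for $k\ge 3$ odd by the observation that every letter of the square would have to lie in $\{1,2,3\}\cap\{3,4,5\}=\{3\}$. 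This is a genuinely different and rather elegant route compared with the paper, which instead places two extra colours $4$ and $5$ sparsely on the vertices (colour $4$ at every fourth vertex, colour $5$ at an offset governed by a nonrepetitive ternary sequence) and then does a case analysis.

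The genuine gap is exactly where you flag it: the existence of a compatible pair $(V,E)$ is the entire mathematical content of your upper bound, and it is not established. You need (i) a square-free word $E$ over $\{3,4,5\}$ of every length in which occurrences of $3$ are pairwise at distance at least $3$, and (ii) a square-free ternary word $V$ that avoids the letter $3$ on the prescribed (well-separated) pairs of consecutive positions. Neither is a "routine adaptation of Thue's construction." Point (ii) in particular is a nonrepetitive list-colouring problem for paths in which some lists have size $2$; the general list problem with lists of size $3$ is open (the best known general bound, due to Grytczuk, Przyby\l o and Zhu, requires lists of size $4$), so any argument here must exploit the specific structure of your lists --- e.g., that the constrained positions form isolated pairs forced to carry $12$ or $21$ --- and this amounts to constructing and verifying a concrete square-free substitution or block code. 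Until (i) and (ii) are actually carried out, the upper bound $\pi_T(P)\le 5$ is not proved by your argument; note that the paper's construction is designed precisely to avoid this difficulty, since its two auxiliary colours are placed so sparsely that the remaining choices reduce to an ordinary nonrepetitive ternary sequence plus a short local case analysis.
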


\begin{proof}
To see the lower bound, assume there is a total Thue colouring of $P$ using only three colours 1,2,3. Consider the colour sequence of the first 3 vertices and edges. Because such a colouring is also a proper total colouring, every colour in the sequence must differ from the two preceding  colours. Then up to renaming of the colours the sequence has to be 123123 which is repetitive, a contradiction.\\
For the upper bound we construct a colouring using 5 colours.
Let $P=v_0,e_1,v_1,...,e_n,v_n$ be a path of length $n\ge 4$. W.l.o.g. we can suppose that $n$ is divisible by $4$, as every other path is subgraph of such a path.\\

\textbf{The colouring algorithm:} \\
\textbf{1.} For all $m$ divisible by 4 colour the vertex $v_m$ with colour 4.\\
\textbf{2.} Let $s_0,s_1,s_2,...,s_{n-1}$ be a nonrepetitive sequence on $\{1,2,3\}$. Then for each $0\le i<n$ colour the vertex $v_{4i+s_i}$ with colour 5. That means between any two vertices of colour 4 there is a vertex of colour 5 and the sequence of distances between the colour 5 vertices to the preceding colour 4 vertices is nonrepetitive.\\
\textbf{3.} Whenever there are two uncoloured vertices between a vertex of colour 4 and 5, colour the edge connecting them with colour 5.\\
\textbf{4.} Colour all uncoloured edges using a nonrepetitive sequence on $\{1,2,3\}$.\\
\textbf{5.} For every vertex that is adjacent to a vertex of colour 4 and 5 use a colour from $\{1,2,3\}$ different from the colours of the neighbouring edges.\\
\textbf{6.} For two adjacent uncoloured vertices in between two vertices of colour 4  consider the edge-colours that appear between the colour 4 vertices. If one colour from $\{1,2,3\}$ is missing, colour the middle vertex with this colour and the other one with a colour from $\{1,2,3\}$ that is different from this one and the colour of the neighboring edge of colour 1, 2 or 3. If all colours of edges appear, the sequence of vertex- and edge-colours between the two vertices of colour 4 has to be $a5bx5yc$, where $a,b,c$ are different edge-colours from $\{1,2,3\}$ and $x$ and $y$ are the vertex-colours to be chosen. Choose $x=c$ and $y=a$. \\

From Lemma~\ref{1} it immediately follows, that there is no repetitive sequence of edge-colours.\\
Assume there is a repetitive sequence of vertex-colours and it contains at least one vertex of colour 4. Then it contains an even number of vertices of colour 4 and exactly as many vertices of colour 5. If the first vertex of colour 4 or 5 has colour 4, then the sequence of distances from the vertices of colour 5 to the preceding vertex of colour 4 is repetitive, a contradiction. In case the first vertex of colour 4 or 5 has colour 5, then the sequence of distances of the vertices of colour 5 to the next vertex of colour 4 must be repetitive. This is a contradiction because if the sequence $\{s_i\}_{i=0}^{n-1}$ is nonrepetitive, the sequence $\{4-s_i\}_{i=0}^{n-1}$ is nonrepetitive, too. Hence, no repetitive sequence of vertex-colours can contain a vertex of colour 4. That means, a repetition of vertex-colours can contain only two elements and adjacent vertices are coloured differently by construction.\\
Now assume that there is a colour sequence of consecutive vertex- and edge-colours. If it contains a colour 4, then this is a vertex-colour that can only be repeated by another vertex-colour. If this is the case, vertex-colours are repeated by vertex-colours and edge-colours by edge-colours. Hence, the subsequences of vertex- and edge-colours must be repetitive themselves. This is not possible as every sequence of consecutive edge-colours is nonrepetitive. Therefore, a repetition can contain at most 3 vertex-colours. All vertices are coloured differently  from their edge neighbours, and the repetition cannot consist of two vertex- and two edge-colours because otherwise two adjacent edges would have the same colour. The only remaining possibility is a sequence of three consecutive  edge- and three consecutive vertex-colours, non of which is colour 4. Now it is easy to see, that these repetitions are excluded by construction steps 5 and 6. Consequently, no repetition of any kind occurs and the constructed colouring is a total Thue colouring.
\end{proof}

In general we conjecture the following:

\begin{Conjecture}
There is an integer  $n$ such that for every path $P$ on at least $n$ vertices $\pi_T(P)=5.$
\end{Conjecture}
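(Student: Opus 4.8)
The plan is to reduce the conjecture to a question in combinatorics on words and then attack that question.

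\textbf{Step 1: a reformulation.} By Theorem~\ref{path} we have $4\le\pi_T(P)\le 5$ for every path on at least four vertices, so the conjecture is equivalent to the existence of a single path with $\pi_T=5$. First I would observe that $\pi_T(P_n)$ is non-decreasing in $n$: the restriction of a total Thue colouring of $P_{n+1}$ to a sub-path $P_n$ is again a total Thue colouring of $P_n$, because every path of $P_n$ is a path of $P_{n+1}$ and the induced vertex- and edge-colourings of $P_n$ are the corresponding restrictions. Since $\pi_T(P_n)\in\{4,5\}$ for $n\ge 4$, the conjecture is equivalent to: only finitely many paths admit a total Thue colouring with four colours. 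Now encode a four-colouring of $P_k=v_0 e_1 v_1\cdots e_k v_k$ as the word $w=\varphi(v_0)\,\varphi(e_1)\,\varphi(v_1)\cdots\varphi(v_k)$ over $\{1,2,3,4\}$. Since the paths of a path are exactly its sub-paths, being a total Thue colouring is equivalent to: $w$ is square-free, the subword $w^{\mathrm{o}}=w_1 w_3 w_5\cdots$ (the vertices) is square-free, and the subword $w^{\mathrm{e}}=w_2 w_4 w_6\cdots$ (the edges) is square-free; note that properness of the total colouring ($w_i\ne w_{i+1}$ and $w_i\ne w_{i+2}$) is then automatic. The set of such words is prefix-closed and finitely branching, so by K\"onig's Lemma the conjecture is equivalent to: \emph{there is no infinite word over a four-letter alphabet whose even-indexed subword, odd-indexed subword and the word itself are all square-free.}

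\textbf{Step 2: attacking the word problem.} One normalises the first three letters of a hypothetical solution to $1,2,3$ (cf.\ the lower-bound argument in the proof of Theorem~\ref{path}) and notes that the combined word $w$ is a non-backtracking walk on $K_4$: from the ordered pair $(w_{i-1},w_i)$ there are exactly two admissible continuations, so there are only $12$ ``states'' and the branching is binary. I would then try two complementary routes. (a) A Thue-style argument: introduce a derived sequence -- for instance the sequence of ordered pairs $(w_{2i-1},w_{2i})$, or the binary sequence recording which of the two admissible letters is chosen at each step, or the slowly varying letter that is absent from the current length-three window -- and show that square-freeness of $w$, $w^{\mathrm{o}}$ and $w^{\mathrm{e}}$ together forces, after a bounded delay, a repetition in the derived sequence that cannot be avoided indefinitely over a bounded alphabet with bounded look-back. (b) A certified finite search: since the claim is that the language is finite, a backtracking search through the (then finite) tree of valid words terminates and outputs the longest word, hence the threshold $n$; one then looks for a concise proof of that explicit bound by verifying that from each of the finitely many reachable configurations a square in $w$, $w^{\mathrm{o}}$ or $w^{\mathrm{e}}$ is forced within a bounded number of further letters.

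\textbf{Step 3: the main obstacle.} The hard part is proving the language finite, i.e.\ excluding an infinite solution. This is a genuine pattern-avoidance finiteness question, and such proofs are usually delicate: they need the right weight/potential function or a carefully organised case analysis over the local configurations of a non-backtracking walk on $K_4$. There is moreover a real risk that the language is infinite -- already $w$, $w^{\mathrm{o}}$ and $w^{\mathrm{e}}$ can all be square-free for words of length at least $12$, e.g.\ $w=(1,2,3,1,2,4,1,2,3,1,4,2)$, so no fixed-window argument can work -- in which case the conjecture as stated would be false and one would instead have to determine the least alphabet size for which the answer stabilises, which by Theorem~\ref{path} is at most $5$. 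Distinguishing the two possibilities, presumably guided by extensive computation, is where essentially all of the difficulty lies.
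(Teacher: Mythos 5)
This statement is a \emph{Conjecture} in the paper; the authors offer no proof of it, so there is no proof of record to compare your attempt against. Judged on its own terms, your Step~1 is correct and is a genuinely useful reformulation: the monotonicity of $\pi_T(P_n)$ in $n$, the encoding of a $4$-colouring of $P_k$ as a word $w$ over $\{1,2,3,4\}$ of odd length, the observation that the total Thue property is exactly ``$w$, $w^{\mathrm{o}}$ and $w^{\mathrm{e}}$ are square-free'' (with properness coming for free), and the K\"onig's Lemma equivalence with the non-existence of an \emph{infinite} four-letter word with all three subwords square-free are all sound, and this is the right way to turn the conjecture into a clean question in combinatorics on words.

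However, the proposal contains no proof of that question, and this is not a technical loose end but the entire content of the conjecture. Route (a) of Step~2 names several candidate derived sequences but proves no lemma about any of them; route (b) is circular as a proof strategy, since the backtracking search is guaranteed to terminate only if the conjecture is already true, and even a terminating search would still need the ``certificate'' you defer to a hoped-for bounded-look-ahead analysis. Step~3 then concedes that the language of valid words may well be infinite --- your length-$12$ example shows the three conditions are simultaneously satisfiable for a while --- in which case the conjecture as stated is false. So what you have is a correct reduction plus an honest statement of the open problem, not a proof; nothing beyond Theorem~\ref{path} (i.e., $4\le\pi_T(P)\le 5$) is actually established. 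If you pursue this, the reformulation in Step~1 is worth keeping, but the finiteness (or infiniteness) of the set of words $w$ over four letters with $w$, $w^{\mathrm{o}}$, $w^{\mathrm{e}}$ all square-free must be settled by an actual argument or an exhaustive, certified computation before anything can be claimed.
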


An immediate consequence for cycles is the following:

\begin{Corollary}
For every cycle $C$ on at least 4 vertices it holds $4\le\pi_T(C)\le 6.$
\end{Corollary}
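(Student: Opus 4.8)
First I would establish the lower bound, which is immediate from Theorem~\ref{path}. Every cycle $C$ on $n\ge 4$ vertices contains four consecutive vertices together with the three edges joining them, i.e.\ a subgraph isomorphic to $P_4$. The restriction of any total Thue colouring of $C$ to the vertices and edges of this subgraph is again a total Thue colouring, since every path of the subgraph (and hence every vertex-path and every edge-path of it) is a path of $C$, so nonrepetitiveness is inherited; moreover properness of the total colouring is itself a consequence of the Thue conditions. Therefore any total Thue colouring of $C$ uses at least $\pi_T(P_4)$ colours, and Theorem~\ref{path} gives $\pi_T(P_4)\ge 4$, whence $\pi_T(C)\ge 4$.

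For the upper bound I would first dispose of all but finitely many cases cheaply. By Currie's theorem \cite{Cur02}, $\pi(C_n)=3$ for every $n\ge 3$ with $n\notin\{5,7,9,10,14,17\}$; and since a vertex-colouring and an edge-colouring of $C_n$ are both just colourings of a cyclic sequence of length $n$, while a path of $C_n$ reads off at most $n$ consecutive vertices but at most $n-1$ consecutive edges, reinterpreting a nonrepetitive vertex-colouring of $C_n$ as a colouring of its edges shows $\pi'(C_n)\le\pi(C_n)=3$ for such $n$. Observation~\ref{3} then yields $\pi_T(C_n)\le\pi(C_n)+\pi'(C_n)\le 6$.

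The genuine difficulty is the six short cycles $C_5,C_7,C_9,C_{10},C_{14},C_{17}$, where $\pi(C_n)=\pi'(C_n)=4$ and Observation~\ref{3} only gives the bound $8$. For these my plan is to delete one edge $e=v_{n-1}v_0$ of $C_n$, colour the resulting path $P_n$ (on $n\ge 5$ vertices) by a total Thue colouring with the five colours $\{1,\dots,5\}$ furnished by Theorem~\ref{path}, and colour $e$ with a sixth colour. Since colour $6$ is used on a single edge it cannot occur in both halves of a repetition, hence in neither; consequently any repetition in the edge-colour sequence, or in the full alternating vertex- and edge-colour sequence, of a path running through $e$ would already be a block of the corresponding sequence of a subpath of the already nonrepetitively coloured $P_n$, which is impossible. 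The one point that is \emph{not} automatic — and where all the difficulty concentrates — is that the sequence of \emph{vertex}-colours of a path through $e$ (which is a suffix of the vertex-colour sequence of $P_n$ followed by a prefix of it, and need not be a block of it) contains no repetition; for the naive colouring this can fail, and indeed for $n\equiv 1\pmod 4$ it is not even a proper colouring, so the colouring of $P_n$ has to be chosen so that its induced vertex-colouring is cyclically nonrepetitive on $C_n$. As the six exceptional cycles are short this is a finite verification, which can equivalently be replaced by simply displaying an explicit total Thue colouring of each of them with at most six colours (for $C_5$, four colours already suffice, Figure~\ref{C5}). Thus $\pi_T(C)\le 6$, and the only place requiring real case-work is this last, finite, check on the vertex-colour sequences created when the cycle is closed up.
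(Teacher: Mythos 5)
Your proof takes a genuinely different route from the paper's. The paper's entire argument for the upper bound is one sentence: choose an edge of the cycle, give it a unique sixth colour, and colour the remaining path with the five colours of Theorem~\ref{path}; there is no case split by cycle length. You instead dispose of all cycles of length $n\notin\{5,7,9,10,14,17\}$ via Currie's theorem \cite{Cur02} and Observation~\ref{3} (since $\pi(C_n)=\pi'(C_n)=3$ there, $\pi_T\le\pi+\pi'\le 6$), and reserve the edge-deletion argument for the six exceptional lengths. What your detour buys is rigour: you correctly observe that the unique colour on the reinserted edge excludes repetitions only from the edge-colour sequence and the full alternating sequence (where that colour would have to occur twice), but \emph{not} from the induced vertex-colour sequence of a path crossing that edge, which is a wrap-around of the path's vertex word and need not be a block of it --- indeed, for $n\equiv 1\pmod 4$ the colouring of Theorem~\ref{path} even places colour $4$ on both endpoints of the reinserted edge. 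This is a real weakness of the paper's one-line argument, which your case split confines to five concrete cycles ($C_5$ being settled by Figure~\ref{C5}). The one thing still owed is the finite verification itself: you assert that explicit $6$-colourings of $C_7$, $C_9$, $C_{10}$, $C_{14}$, $C_{17}$ exist but do not exhibit them, so the hardest instances of the corollary rest on an unexecuted (though certainly feasible) check.
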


This can be achieved by choosing one edge of a unique colour and colour the remaining path as before. But in many cases, at least if the number of vertices is large enough and divisible by 4, the colouring strategy from the previous theorem can be applied directly to generate a colouring with 5 colours. \\

The following theorem gives the exact values of the total Thue numbers of stars.

\begin{Theorem}
Let $S_n=K_{1,n}$ be a star on $n+1\geq 4$ vertices. Then $\pi_T(S_n)=n+1$.
\end{Theorem}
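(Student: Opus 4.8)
The plan is to prove the two inequalities $\pi_T(S_n)\ge n+1$ and $\pi_T(S_n)\le n+1$ separately; throughout I write $c$ for the centre of $S_n$, $v_1,\dots,v_n$ for its leaves, and $e_i=cv_i$ for the edges.

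For the lower bound, note first that the edges $e_1,\dots,e_n$ are pairwise adjacent: for $i\ne j$ both of $e_i,e_j$ lie on the path $v_i,c,v_j$, whose induced edge-colour sequence is $\varphi(e_i)\varphi(e_j)$, and this is a repetition unless $\varphi(e_i)\ne\varphi(e_j)$. Hence the $n$ edges receive $n$ pairwise distinct colours. Moreover the path $c,v_i$ has total-colour sequence $\varphi(c)\varphi(e_i)\varphi(v_i)$, which is a repetition as soon as $\varphi(c)=\varphi(e_i)$; so $\varphi(c)$ must avoid all $n$ edge-colours, forcing an $(n+1)$-st colour. Thus $\pi_T(S_n)\ge n+1$. (Equivalently, one may invoke the remark made earlier that every total Thue colouring is a proper total colouring, together with the fact that the total chromatic number of $K_{1,n}$ equals $n+1$.)

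For the upper bound I would exhibit an explicit colouring on the colour set $\{1,\dots,n+1\}$: set $\varphi(e_i)=i$ for $1\le i\le n$, set $\varphi(c)=n+1$, and give each leaf $v_i$ an arbitrary colour from $\{1,\dots,n\}\setminus\{i\}$. This last step is exactly where the hypothesis $n+1\ge 4$ (i.e. $n\ge 3$) enters, since it guarantees at least two admissible colours at each leaf; concretely one may take $\varphi(v_i)=1$ for $i\ne 1$ and $\varphi(v_1)=2$. It remains to verify nonrepetitiveness. Since $S_n$ has diameter $2$, every path is a subpath of one of the form $v_i,c,v_j$ with $i\ne j$, whose total-colour sequence is $\varphi(v_i),\,i,\,n+1,\,j,\,\varphi(v_j)$, whose induced vertex sequence is $\varphi(v_i),\,n+1,\,\varphi(v_j)$, and whose induced edge sequence is $i,\,j$. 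One checks directly that the constructed colouring is a proper total colouring ($\varphi(v_i)\notin\{i,n+1\}$, the $e_i$ are all distinct, and $\varphi(c)=n+1$ lies off the edge-colours), which already rules out every length-$2$ repetition in each of these sequences; the only candidate length-$4$ repetitions inside $\varphi(v_i),i,n+1,j,\varphi(v_j)$ are the blocks $(\varphi(v_i),i,n+1,j)$ and $(i,n+1,j,\varphi(v_j))$, and both require $i=j$, which is impossible. The edge sequence $i,j$ is trivially nonrepetitive, and the vertex sequence $\varphi(v_i),n+1,\varphi(v_j)$ has length $3$, so admits only length-$2$ repetitions, already excluded. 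Hence the colouring is a total Thue colouring and $\pi_T(S_n)\le n+1$.

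I do not expect a genuine obstacle here: the whole argument is a short case analysis over paths of length at most two in a star, since the star is so shallow. The only point that really uses the hypothesis is the leaf-colouring step, where $n\ge3$ is needed so that $\{1,\dots,n\}\setminus\{i\}$ is non-empty (indeed has size $\ge 2$); I would make sure to state this explicitly so the role of the assumption $n+1\ge 4$ is clear.
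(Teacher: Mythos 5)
Your proof is correct and follows essentially the same route as the paper: the identical lower-bound argument (the pairwise adjacent edges together with the centre force $n+1$ distinct colours), followed by an explicit $(n+1)$-colouring verified over the length-at-most-two paths of the star. The only difference is cosmetic: the paper assigns each leaf the colour of a cyclically shifted edge so that all vertex colours are distinct, whereas you reuse one colour on most leaves, which is equally valid since two leaves are never consecutive on a path (and only non-emptiness of $\{1,\dots,n\}\setminus\{i\}$, not size $\ge 2$, is actually needed).
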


\begin{proof}
All the edges of $S_n$ are adjacent to each other, and therefore, they have to be coloured with different colours in every strong total Thue colouring $\varphi$ of $S_n$. They are incident with the central vertex $v$ of the star as well, therefore, $v$ has to be coloured with a new colour under $\varphi$. Hence, $\pi_T(S_n)\geq n+1$.
In order to obtain a strong total Thue colouring of the star $S_n$ colour the uncoloured vertices $w_1$,
$w_2,\dots w_n$ as follows: let $\varphi(w_n)=\varphi(w_1 v)$ and for $i=1,2,\dots n-1$ let $\varphi(w_i)=\varphi(w_{i+1} v)$. \\
All the vertices of $S_n$ are coloured with different colours and all the paths on vertices and edges of $S_n$ are coloured with a colour sequence of the form $abcdb$ or $abcde$, therefore, $\varphi$ is a strong total Thue colouring using $n+1$ colours.
\end{proof}

\section{Bounds depending on the maximum degree}

\begin{Theorem}\label{hlavna}
Let $G$ be a graph with maximum degree $\Delta\geq 3$. Then $\pi_T < 15\Delta^2$.
\end{Theorem}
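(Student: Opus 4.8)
The plan is to obtain the bound by the probabilistic method, using the weighted (asymmetric) form of the Lov\'asz Local Lemma to colour the vertices and edges of $G$ simultaneously. Fix a palette of $k$ colours (with $k<15\Delta^2$, the precise value to be pinned down at the end) and colour every element of $V(G)\cup E(G)$ independently and uniformly at random. For every simple path $P$ of $G$ I introduce three families of ``bad'' events: (V) for each window of even length of the vertex-colour sequence of $P$, the event that this window is a repetition; (E) the analogous events for the edge-colour sequence of $P$; and (M) for each window of the alternating vertex--edge colour sequence of $P$ whose half-length is odd, the event that it is a repetition. First I would check that a colouring avoiding all three families is a total Thue colouring: absence of the (V)- and (E)-events means the induced vertex- and edge-colourings are nonrepetitive, and then a window of the alternating sequence whose colour word is a repetition has even total length $2s$; if $s$ is even such a window forces its own vertex-subsequence (a run of consecutive vertices of $P$) to be a repetition, which is already excluded, and if $s$ is odd the window is exactly an (M)-event. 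Hence no repetition of any of the kinds required by the definition survives.

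Next come the two quantitative ingredients. Since the $2s$ vertices and edges involved in an event attached to a window of half-length $s$ are pairwise distinct, their colours are independent, so the event has probability exactly $k^{-s}$; moreover two events are dependent only if their windows share an element, and in particular (V)-events and (E)-events are mutually independent. The counting lemma I need bounds, for a fixed element $z$ and a fixed half-length $s'$, the number of windows of each family of half-length $s'$ passing through $z$. The key observation is that the number of simple paths with a prescribed number $\ell$ of edges (or vertices) through a fixed vertex or edge of $G$ is at most of order $\Delta^{\ell-1}$, because after the first edge leaving the fixed element each continuation has at most $\Delta-1$ choices; this gives bounds of the shape $c\,s'\,\Delta^{2s'-1}$ for the (V)- and (E)-windows through $z$ and an even smaller $c\,s'^2\,\Delta^{s'-1}$ for the (M)-windows (a length-$2s'$ window of the alternating sequence spans only about $s'$ edges of $P$). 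Multiplying by the at most $2s$ elements of the tested window yields the dependency counts.

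I would then apply the Local Lemma with the geometric weights $x_B=(\beta/\Delta^2)^{s}$ for an event $B$ coming from a window of half-length $s$, where $\beta\in(0,1)$ is a parameter. Using $-\ln(1-t)\le t/(1-t)$ together with the counting lemma and the geometric series $\sum_{s'\ge1}s'\beta^{s'}=\beta/(1-\beta)^2$, the condition $\Pr(B)\le x_B\prod_{B'\sim B}(1-x_{B'})$ collapses, after cancelling the common factor $s$, to an inequality of the form $\ln(k\beta/\Delta^2)\ge \frac{1}{\Delta}\,C(\beta)$, where $C(\beta)$ depends only on $\beta$ and on the absolute constants coming from the path counts; the crucial gain is the factor $1/\Delta$, which for $\Delta\ge3$ keeps the right-hand side small. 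Solving for $k$ gives $k\ge\frac{\Delta^2}{\beta}\exp(\frac{1}{\Delta}C(\beta))$, and a numerical choice of $\beta$ (around $0.15$ to $0.2$) together with $\Delta\ge3$ makes the right-hand side strictly smaller than $15\Delta^2$. Hence a suitable $k<15\Delta^2$ verifies the hypotheses of the Local Lemma, so with positive probability the random colouring is a total Thue colouring and $\pi_T(G)<15\Delta^2$. The list version follows from the identical argument with ``uniform colour of the list $L(x)$'' in place of ``uniform colour of the palette'': the only change is that each collision probability drops from $1/k$ to at most $1/|L(x)|$, so the computation goes through with $k$ replaced by the common list size, and a slightly less aggressive choice of $\beta$ absorbs the minor losses to give the stated $18\Delta^2$.

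The step I expect to be the main obstacle is the counting lemma. Everything hinges on getting the exponent $2s'-1$ rather than $2s'$ for the number of relevant sub-windows through a fixed element, since it is precisely this extra factor $1/\Delta$ that pushes the final constant below $15$; obtaining it requires carefully distinguishing whether the fixed element is a vertex or an edge and whether it is interior to or an endpoint of the window, and using $\Delta-1$ rather than $\Delta$ at every continuation step. The second delicate point, conceptually, is the treatment of the ``mixed'' windows of odd half-length: these are exactly the configurations of the alternating sequence not already controlled by requiring the induced vertex- and edge-colourings to be nonrepetitive, so they must be included explicitly as a third family of events. Once the counting lemma and the inclusion of the (M)-events are in place, the remainder is a routine optimisation of $\beta$.
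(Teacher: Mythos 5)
Your argument is sound in outline but follows a genuinely different (and much heavier) route than the paper. The paper proves Theorem~\ref{hlavna} in two lines: it quotes the bound $\pi(G)<3\Delta^2$ of Dujmovi\'c et al.\ for the vertex version, applies that same bound to the line graph of $G$ (maximum degree $<2\Delta$) to get $\pi'(G)<12\Delta^2$, and then invokes Observation~\ref{3}, i.e.\ colours vertices and edges from \emph{disjoint} palettes so that Lemma~\ref{1} kills all mixed repetitions for free; summing gives $15\Delta^2$. Your proposal is instead essentially the machinery the paper reserves for the list version (Theorem~\ref{pLLL}): a single random colouring of $V\cup E$, three families of bad events, and the asymmetric Local Lemma. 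Your identification of which mixed windows must be handled explicitly (only those of odd half-length, since even half-length windows already force a vertex repetition) is correct and matches the role of the events $C_{R_t}$ in the paper. What your approach buys is uniformity — it transfers verbatim to lists, where the disjoint-palette trick is unavailable, as the paper itself points out — at the cost of all the counting and optimisation. What it loses is certainty about the constant: you should be aware that the paper's own execution of exactly this LLL computation, using the cruder dependency counts $|A_s|\le 2ts\Delta^{2s}$ and $|B_s|\le 4ts\Delta^{2s}$, only reaches $17.9856\Delta^2$, which is \emph{worse} than $15\Delta^2$. So your claim to land below $15\Delta^2$ rests entirely on the extra factor $\Delta^{-1}$ in your counting lemma (a path on $2s$ vertices through a fixed vertex has only $2s-1$ edges to choose, giving $\le s\Delta^{2s-1}$ rather than $\Delta^{2s}$). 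That sharpening is legitimate and, by a rough recomputation, does push the constant to roughly $10\Delta^2$ for $\Delta\ge 3$; but since you leave the final numerical inequality as an assertion, this is the one step you must actually carry out before the proof is complete — without it you have only reproved the weaker $17.9856\Delta^2$ bound, and the theorem as stated would not follow.
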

\begin{proof}
Let $G$ be a graph of maximum degree $\Delta\geq 3$. Dujmovi\'c et. al. \cite{Dujmovic} proved that $\pi(G)<3\Delta^2$. By considering the line graph $H$ of $G$ (which has a maximum degree of less than $2\Delta$ this implies for the edge version of the problem $\pi'(G)\le \pi(H)<12\Delta^2$. Together with Observation \ref{3} this implies $\pi_T(G)<15 \Delta^2.$
\end{proof}

Note, that the upper bound $3\Delta^2$ on the Thue chromatic number used in the proof can be improved for larger $\Delta$. The actual bound given in \cite{Dujmovic} is $\Delta^2+o(\Delta^2)$, which with the same arguments as above implies $\pi_T(G)<5\Delta^2+o(\Delta^2)$.\\

Our last result is an extension of the above result to list colourings. The graph $G$ is {\em  nonrepetitively total $l$-choosable} if for every list assignment
$L:(V\cup E) \rightarrow 2\sp{\mathbb{N}}$ with minimum list size at least $l$ there exists a total Thue colouring $\varphi_L$ with colours from the associated lists. The {\em total Thue choice number} of a graph $G$ is the minimum number $l$, such that $G$ is nonrepetitively total $l$-choosable. (One can similarly define also the weak total Thue choice number of a graph). A bound on this parameter cannot be proved by considering vertex- and edge-colourings separately because it cannot be guaranteed, that the used colour sets of both colourings will be distinct.\\
We will use a probabilistic approach to prove our result.  In probability theory, if a large number of events are all independent of one another and each has probability less than $1$, then there is a positive probability that none of the events will occur. The Lov\'asz Local Lemma (see Erd\H{o}s and Lov\'asz \cite{ErLo75}) allows one to relax the independence condition slightly: As long as the events are "mostly" independent from one another and aren't individually too likely, then there is a positive probability that none of them occurs.
There are several different versions of the lemma - see Alon and Spencer \cite{AlSp00}. We will use the asymmetric one formulated below:

\begin{Lemma}\label{ALL}
Let $\mathcal{A}=\{A_1,A_2,\dots,A_n\}$ be a finite set of events in the probability space $\Omega$. For $A\in\cal{A}$ let $\Gamma(A)$ denote a subset of $\cal{A}$ such that $A$ is independent from the collection of events
$\mathcal{A}\setminus (\{A\}\cup \Gamma(A))$. If there exists an assignment of reals $x:\mathcal{A}\rightarrow (0;1)$ to the events such that $\forall A \in \mathcal{A}: P(A) \leq x(A) \prod_{B\in \Gamma(A)} (1-x(B))$ then the probability of avoiding all events in $\mathcal{A}$ is positive, in particular
$P(\overline{A_1}, \overline{A_2}, \dots, \overline{A_n})\geq \prod_{A\in\mathcal{A}} (1-x(A)).$
\end{Lemma}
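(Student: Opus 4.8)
The plan is to prove by induction the following strengthening, from which the lemma is an easy consequence: for every event $A\in\mathcal{A}$ and every subset $S\subseteq\mathcal{A}\setminus\{A\}$ one has $P\!\bigl(A\mid\bigcap_{B\in S}\overline{B}\bigr)\le x(A)$, where along the way we also keep track of the fact that each conditioning event $\bigcap_{B\in S}\overline{B}$ has positive probability, so that the conditional probability is well defined. Granting this, the assertion of the lemma follows from the chain rule: fixing any enumeration $\mathcal{A}=\{A_1,\dots,A_n\}$,
\[
P(\overline{A_1}\cap\cdots\cap\overline{A_n})=\prod_{i=1}^{n}P\!\bigl(\overline{A_i}\mid\overline{A_1}\cap\cdots\cap\overline{A_{i-1}}\bigr)\ge\prod_{i=1}^{n}\bigl(1-x(A_i)\bigr)>0 .
\]
Note that the hypothesis already forces $P(A_i)\le x(A_i)<1$ (the product over $\Gamma(A_i)$ being a product of factors in $(0,1)$, or empty), so each $\overline{A_i}$ has positive probability and the induction can get started.

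I would run the induction on $|S|$. The base case $S=\emptyset$ is immediate, since $P(A)\le x(A)\prod_{B\in\Gamma(A)}(1-x(B))\le x(A)$, and $P(\Omega)=1>0$. For the inductive step, fix $A$ and a nonempty $S$, and split $S=S_1\cup S_2$ with $S_1=S\cap\Gamma(A)$ and $S_2=S\setminus\Gamma(A)$; write $F_T=\bigcap_{B\in T}\overline{B}$. Assuming $P(F_S)>0$ (guaranteed by the positivity part of the induction), expand
\[
P(A\mid F_S)=\frac{P\!\bigl(A\cap F_{S_1}\mid F_{S_2}\bigr)}{P\!\bigl(F_{S_1}\mid F_{S_2}\bigr)} .
\]
For the numerator, $P\!\bigl(A\cap F_{S_1}\mid F_{S_2}\bigr)\le P(A\mid F_{S_2})=P(A)$, where the equality uses that $A$ is independent of the collection $\mathcal{A}\setminus(\{A\}\cup\Gamma(A))$, which contains $S_2$; hence the numerator is at most $x(A)\prod_{B\in\Gamma(A)}(1-x(B))$.

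For the denominator, enumerate $S_1=\{B_1,\dots,B_r\}$ and expand by the chain rule,
\[
P\!\bigl(F_{S_1}\mid F_{S_2}\bigr)=\prod_{j=1}^{r}P\!\bigl(\overline{B_j}\mid\overline{B_1}\cap\cdots\cap\overline{B_{j-1}}\cap F_{S_2}\bigr)\ge\prod_{j=1}^{r}\bigl(1-x(B_j)\bigr),
\]
each factor being bounded by the induction hypothesis applied to $B_j$ together with the conditioning set $\{B_1,\dots,B_{j-1}\}\cup S_2$, which has size $(j-1)+|S_2|\le |S_1|-1+|S_2|=|S|-1<|S|$ and does not contain $B_j$. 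Dividing, and using $S_1\subseteq\Gamma(A)$ so that the surviving factors are at most $1$, gives $P(A\mid F_S)\le x(A)\prod_{B\in\Gamma(A)}(1-x(B))\big/\prod_{B\in S_1}(1-x(B))\le x(A)$. This in turn yields $P(\overline{A}\mid F_S)\ge 1-x(A)>0$, hence $P(\overline{A}\cap F_S)>0$, which is exactly what propagates the positivity statement when one more event is adjoined to the intersection; the induction closes.

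The only genuine obstacle here is this bookkeeping of positivity: conditional probabilities are meaningful only on events of positive probability, so the inductive statement must carry the claim $P(\bigcap_{B\in S}\overline{B})>0$ alongside the probability bound. Beyond that, the two points to watch are that every invocation of the induction hypothesis must be on a conditioning set strictly smaller than $S$ — which is precisely why the split of $S$ is taken along $\Gamma(A)$ rather than arbitrarily — and that the independence hypothesis is applied to the correct family, namely $S_2\subseteq\mathcal{A}\setminus(\{A\}\cup\Gamma(A))$. The computational heart of the argument is then just the short ratio estimate displayed above.
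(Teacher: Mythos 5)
Your proof is correct: it is the standard inductive argument for the asymmetric Lov\'asz Local Lemma (induction on the size of the conditioning set, splitting $S$ into $S\cap\Gamma(A)$ and $S\setminus\Gamma(A)$, and bounding the numerator via the independence hypothesis and the denominator via the chain rule), including the necessary bookkeeping that the conditioning events have positive probability. The paper itself does not prove this lemma --- it quotes it from Erd\H{o}s--Lov\'asz and Alon--Spencer --- and your argument is precisely the proof given in those sources, so there is nothing to reconcile.
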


\begin{Theorem}\label{pLLL}
For every graph with maximum degree at most $\Delta$; $\Delta\geq 3$, the total Thue choice number is at most $17.9856\Delta^2$.
\end{Theorem}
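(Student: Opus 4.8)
The plan is to use the Lovász Local Lemma (Lemma \ref{ALL}) directly on a random colouring of $V(G)\cup E(G)$, where each vertex and each edge independently receives a colour chosen uniformly at random from its own list. Fix $l = 17.9856\Delta^2$ and assume every list has size at least $l$. A total Thue colouring fails exactly when some path of even length $2k$ carries a repetition of its vertex-and-edge colour sequence (length $2(2k)+1$ alternating... more precisely a path on $2n$ consecutive vertices/edges whose combined sequence is a repetition), or when the induced vertex-colouring has a repetitive path, or when the induced edge-colouring has a repetitive path. Define, for each such potentially-bad path $P$, the event $A_P$ that its relevant colour sequence is a repetition; there are three families (total, vertex, edge), but since the vertex- and edge-repetitions are special cases it suffices to handle repetitions of any sub-sequence consisting purely of vertices, purely of edges, or of the full alternating string. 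I would bound $P(A_P)$ for a path whose sequence has $2t$ ``slots'': each slot in the second half is forced to equal its partner, each forced match happens with probability at most $1/l$, and the slots whose colours are chosen freely contribute nothing — so $P(A_P)\le l^{-t}$ (being slightly careful that when a vertex and an edge in a matched pair are incident one must not double count, but independence of the underlying random choices is what matters, not properness).

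Next I would set up the dependency structure. Two events $A_P, A_Q$ are dependent only if the paths $P$ and $Q$ share a vertex or an edge. For a fixed ``length'' parameter — say the bad sequence uses $h$ vertices and $h'$ edges with $h+h'$ bounded below according to which family we are in — the number of paths of a given size through a fixed element is at most roughly $(\text{size})\cdot(2\Delta)^{\text{size}}$, since from any anchor one extends along at most $2\Delta$ choices of next element (vertices have degree $\le\Delta$, but in the subdivision-style alternating walk each step of the combined sequence branches by at most $\Delta$, and choosing which position of $P$ the shared element occupies gives the linear factor). I would then assign to each event $A_P$ a weight $x(A_P) = \mu^{t(P)}$ for a single constant $\mu\in(0,1)$ depending only on the ``half-length'' $t(P)$ of $P$'s bad sequence, and verify the LLL inequality
\[
P(A_P)\ \le\ x(A_P)\prod_{Q\in\Gamma(A_P)}\bigl(1-x(A_Q)\bigr)
\]
by the standard geometric-series estimate: $-\log\prod(1-x(A_Q)) \le \sum_{Q}2x(A_Q)$ once the $x$-values are small, and $\sum_Q 2x(A_Q)$ factors as a sum over the element shared, over the position of that element, and over the remaining free extensions, each extension contributing a factor of order $2\Delta\cdot\mu$. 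Choosing $\mu$ so that $(2\Delta)^2\mu < $ a suitable constant and then $l$ large enough that $l^{-1}$ beats $\mu\cdot e^{(\text{that constant})}$ yields a numerical condition of the form $l > c\,\Delta^2$; optimising the constant $c$ over the choice of $\mu$ is exactly what produces the stated $17.9856$.

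The main obstacle I expect is the bookkeeping that makes the three families of bad events (full total-sequence repetitions, pure-vertex repetitions, pure-edge repetitions) all fit under one weight function and one LLL verification without the constant blowing up — in particular getting the counting of neighbouring paths through a shared vertex \emph{and} through a shared edge to collapse into the same geometric sum, and handling the shortest bad sequences (the ``$abab$''-type repetitions of length $4$ in each family) as the base case that dominates the inequality. I would isolate those short cases first, check the LLL inequality for them by hand, and then run the general geometric estimate for all longer sequences; the delicate point is that the exponent $t(P)$ grows while the neighbourhood size grows only like a fixed power of $\Delta$ per unit of $t$, so a single $\mu$ works for all $t\ge 2$ simultaneously, and the worst constraint is at $t=2$, which is where the figure $17.9856\Delta^2$ is pinned down. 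Finally, once the LLL hypotheses hold, Lemma \ref{ALL} gives positive probability that no $A_P$ occurs, hence a total Thue colouring from the lists exists, proving the bound.
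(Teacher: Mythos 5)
Your plan follows essentially the same route as the paper: a uniformly random choice from each list, the asymmetric Local Lemma applied to three families of bad events (pure vertex-colour repetitions, pure edge-colour repetitions, and repetitions of the alternating vertex--edge sequence), the probability bound $l^{-t}$ per event, neighbourhood counts of order $ts\Delta^{2s}$ for paths through a shared element, and weights exponential in the half-length verified via the estimate $\frac{x}{1+x}>e^{-1/x}$ and a geometric series. The only substantive differences are that the paper uses two distinct weight parameters ($a=b=7.5\Delta^2$ for the pure vertex/edge events and $c=10\Delta^2$ for the mixed events) rather than a single $\mu$, and that with power-form weights the LLL condition is the $t$-th power of a $t$-independent inequality, so the binding case is effectively $t=1$ rather than the $t=2$ ``$abab$'' case you single out --- neither point changes the validity of your approach.
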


\begin{proof}
Let $G$ be a graph with maximum degree at most $\Delta$, where every vertex and edge is endowed with a list of at least $17.9856\Delta^2$ colours.
To fulfil the conditions of Lov\'asz Local Lemma we suppose that the colour of each vertex and edge is chosen randomly, independently and equiprobably out of its list. We consider the following types of bad events that may happen when this procedure is applied:

\begin{itemize}
\item For every path $P_t$ on $2t$ vertices let $A_{P_t}$ denote the event that the colour sequence of the first $t$ vertices is the same as the colour sequence of the second $t$ vertices. For the probability of the event we have $Pr(A_{P_t})\le \left(\frac{1}{17.9856\Delta^2}\right)^t$. We assign the number $x_{P_t}=\frac{1}{1+a^t}$ to the event $A_{P_t}$, where $a=7.5\Delta^2$.

\item For every path $Q_t$ on $2t$ edges let $B_{Q_t}$ denote the event that the colour sequence of the first $t$ edges is the same as the colour sequence of the second $t$ edges. For the probability of the event we have $Pr(B_{Q_t})\le \left(\frac{1}{17.9856\Delta^2}\right)^t$.We assign the number $y_{P_t}=\frac{1}{1+b^t}$ to the event $B_{Q_t}$, where $b=7.5\Delta^2$.

\item For every path $R_t=(v_1,e_1,v_2,e_2,...,v_t,e_t)$ on $t$ vertices together with the internal $t-1$ edges and one edge incident with the final vertex $v_t$ let $C_{R_t}$ denote the event that the colour sequence of the first $t$ elements (vertices and edges) of $R_t$ is the same as the colour sequence of the second half. For the probability of the event we have $Pr(C_{R_t})\le \left(\frac{1}{17.9856\Delta^2}\right)^t$.We assign the number $z_{P_t}=\frac{1}{1+c^t}$ to the event $C_{R_t}$, where $c=10\Delta^2$.
\end{itemize}

For an arbitrary event $A_{P_t}$ let $A_s$ denote the set of paths on $2s$ vertices sharing at least one vertex with $P_t$ and $C_s$ the set of paths $R_s$ on $s$ vertices and $s$ edges sharing at least one vertex with $P_t$. It it easy to see,
that $|A_s|\le 2ts\Delta^{2s}$ and $|C_s|\le 2ts\Delta^s\le \frac{2}{3}ts\Delta^{2s}$, as $\Delta\geq 3$.
We will show that
\begin{equation}\label{eins}
Pr(A_{P_t})\le x_{P_t}\cdot\prod\limits_{s=1}^\infty\prod\limits_{P_s\in A_s\setminus\{P_t\}}(1-x_{P_s})\prod\limits_{R_s\in C_s}(1-z_{R_t})
\end{equation}
Consider the right hand side $RHS_1$ of inequality (\ref{eins}):
\begin{eqnarray*}
RHS_1&=&x_{P_t}\cdot\prod\limits_{s=1}^\infty\prod\limits_{P_s\in A_s\setminus\{P_t\}}(1-x_{P_s})\prod\limits_{R_s\in C_s}(1-z_{R_t})\\
&=&\frac{1}{1+a^t}\cdot \frac{1+a^t}{a^t}\cdot\prod\limits_{s=1}^\infty\prod\limits_{P_s\in A_s}\left(1-\frac{1}{1+a^s}\right)\prod\limits_{R_s\in C_s}\left(1-\frac{1}{1+c^s}\right)\\
&\ge& \frac{1}{a^t}\prod\limits_{s=1}^\infty\left(\frac{a^s}{1+a^s}\right)^{2ts\Delta^{2s}}\left(\frac{c^s}{1+c^s}\right)^{\frac23ts\Delta^{2s}}\\
&\ge&\frac{1}{a^t}\prod\limits_{s=1}^\infty \left(e^{-\frac{1}{a^s}}\right)^{2ts\Delta^{2s}}\left(e^{-\frac{1}{c^s}}\right)^{\frac23ts\Delta^{2s}}, \end{eqnarray*}
since for all positive $x$ it holds that $\frac{x}{1+x}>e^{-\frac 1x}$.  Moreover,
\begin{eqnarray*}
\frac{1}{a^t}\prod\limits_{s=1}^\infty \left(e^{-\frac{1}{a^s}}\right)^{2ts\Delta^{2s}}\left(e^{-\frac{1}{c^s}}\right)^{\frac23ts\Delta^{2s}}&=&\frac{1}{a^t}\cdot\left(e^{-2\sum\limits_{s=1}^\infty s\cdot\left(\frac{\Delta^2}{a}\right)^s-\frac23\sum\limits_{s=1}^\infty s\cdot\left(\frac{\Delta^2}{c}\right)^s}\right)^t= \\
=\frac{1}{(7.5\Delta^2)^t}\cdot\left(e^{-2\sum\limits_{s=1}^\infty s\cdot\left(\frac{1}{7.5}\right)^s-\frac23\sum\limits_{s=1}^\infty s\cdot\left(\frac{1}{10}\right)^s}\right)^t
&=&\frac{1}{(7.5\Delta^2)^t}\cdot\left(e\sp{\frac{-4490}{10266.75}}\right)^t ,
\end{eqnarray*}
as $\sum\limits_{s=1}\sp{\infty} {s\cdot x\sp{s}}=\frac{x}{(x-1)\sp{2}}$.
Hence, $RHS_1\ge\left(\frac{0.6457}{7.5\Delta^2}\right)^t > \left(\frac{1}{17.9856\Delta^2}\right)^t$, what  proves inequality (\ref{eins}).\\

For an arbitrary event $B_{Q_t}$ let $B_s$ denote the set of paths on $2s$ edges sharing at least one edge with $Q_t$ and $C_s$ the set of paths $R_s$ on $s$ vertices and $s$ edges sharing at least one edge with $Q_t$. It is easy to see that $|B_s|\le 4ts\Delta^{2s}$ and $|C_s|\le 4ts\Delta^s\le \frac43ts\Delta^{2s}$.
Similarly as in the previous case we will show that
\begin{equation}\label{zwei}
Pr(B_{Q_t})\le y_{Q_t}\cdot\prod\limits_{s=1}^\infty\prod\limits_{Q_s\in B_s\setminus\{Q_t\}}(1-y_{Q_s})\prod\limits_{R_s\in C_s}(1-z_{R_t}).
\end{equation}
Consider the right hand side $RHS_2$ of inequality (\ref{zwei}):
\begin{eqnarray*}
RHS_2&=&y_{Q_t}\cdot\prod\limits_{s=1}^\infty\prod\limits_{Q_s\in B_s\setminus\{Q_t\}}(1-y_{Q_s})\prod\limits_{R_s\in C_s}(1-z_{R_t})\\
&=&\frac{1}{1+b^t}\cdot \frac{1+b^t}{b^t}\cdot\prod\limits_{s=1}^\infty\prod\limits_{Q_s\in B_s}\left(1-\frac{1}{1+b^s}\right)\prod\limits_{R_s\in C_s}\left(1-\frac{1}{1+c^s}\right)\\
&\ge& \frac{1}{b^t}\prod\limits_{s=1}^\infty\left(\frac{b^s}{1+b^s}\right)^{4ts\Delta^{2s}}\left(\frac{c^s}{1+c^s}\right)^{\frac43ts\Delta^{2s}}\\
&\ge&\frac{1}{b^t}\prod\limits_{s=1}^\infty \left(e^{-\frac{1}{b^s}}\right)^{4ts\Delta^{2s}}\left(e^{-\frac{1}{c^s}}\right)^{\frac43ts\Delta^{2s}}
=\frac{1}{b^t}\cdot\left(e^{-4\sum\limits_{s=1}^\infty s\cdot\left(\frac{\Delta^2}{b}\right)^s-\frac43\sum\limits_{s=1}^\infty s\cdot\left(\frac{\Delta^2}{c}\right)^s}\right)^t\\
&=&\frac{1}{(7.5\Delta^2)^t}\cdot\left(e^{-4\sum\limits_{s=1}^\infty s\cdot\left(\frac{1}{7.5}\right)^s-\frac43\sum\limits_{s=1}^\infty s\cdot\left(\frac{1}{10}\right)^s}\right)^t
=\frac{1}{(7.5\Delta^2)^t}\cdot\left(e\sp{\frac{-8980}{10266.75}}\right)^t
\end{eqnarray*}
Hence, $RHS_2\ge\left(\frac{ 0.4170003}{7.5\Delta^2}\right)^t >\left(\frac{1}{17.9856\Delta^2}\right)^t$, what proves inequality (\ref{zwei}).\\

For an arbitrary event $C_{R_t}$ let $A_s$ denote the set of paths on $2s$ vertices sharing at least one vertex with $R_t$, $B_s$ denote the set of paths on $2s$ edges sharing at least one edge with $R_t$ and $C_s$ the set of paths $R_s$ on $s$ vertices and $s$ edges sharing at least one vertex with $R_t$. It is easy to see that $|A_s|\le ts\Delta^{2s}$, $|B_s|\le 2ts\Delta^{2s}$ and $|C_s|\le ts\Delta^s\le \frac13ts\Delta^{2s}$. We will show that
\begin{equation}\label{drei}
Pr(C_{R_t})\le z_{R_t}\cdot\prod\limits_{s=1}^\infty\prod\limits_{P_s\in A_s}(1-x_{P_s})\prod\limits_{Q_s\in B_s}(1-y_{Q_s})\prod\limits_{R_s\in C_s\setminus\{R_t\}}(1-z_{R_t})
\end{equation}
Consider the right hand side $RHS_3$ of inequality (\ref{drei}):
\begin{eqnarray*}
RHS_3&=&z_{R_t}\cdot\prod\limits_{s=1}^\infty\prod\limits_{P_s\in A_s}(1-x_{P_s})\prod\limits_{Q_s\in B_s}(1-y_{Q_s})\prod\limits_{R_s\in C_s\setminus\{R_t\}}(1-z_{R_t})\\
&=&\frac{1}{1+c^t}\cdot \frac{1+c^t}{c^t}\cdot\prod\limits_{s=1}^\infty\prod\limits_{P_s\in A_s}\left(1-\frac{1}{1+a^s}\right)\prod\limits_{Q_s\in B_s}\left(1-\frac{1}{1+b^s}\right)\prod\limits_{R_s\in C_s}\left(1-\frac{1}{1+c^s}\right)\\
&\ge& \frac{1}{c^t}\prod\limits_{s=1}^\infty\left(\frac{a^s}{1+a^s}\right)^{ts\Delta^{2s}}\left(\frac{b^s}{1+b^s}\right)^{2ts\Delta^{2s}}\left(\frac{c^s}{1+c^s}\right)^{\frac13ts\Delta^{2s}}\\
&\ge&\frac{1}{c^t}\prod\limits_{s=1}^\infty \left(e^{-\frac{1}{a^s}}\right)^{ts\Delta^{2s}}\left(e^{-\frac{1}{b^s}}\right)^{2ts\Delta^{2s}}\left(e^{-\frac{1}{c^s}}\right)^{\frac13ts\Delta^{2s}}\\
&=&\frac{1}{c^t}\cdot\left(e^{-\sum\limits_{s=1}^\infty s\cdot\left(\frac{\Delta^2}{a}\right)^s-2\sum\limits_{s=1}^\infty s\cdot\left(\frac{\Delta^2}{b}\right)^s-\frac13\sum\limits_{s=1}^\infty s\cdot\left(\frac{\Delta^2}{c}\right)^s}\right)^t\\
&=&\frac{1}{(10\Delta^2)^t}\cdot\left(e^{-3\sum\limits_{s=1}^\infty s\cdot\left(\frac{1}{7.5}\right)^s-\frac13\sum\limits_{s=1}^\infty s\cdot\left(\frac{1}{10}\right)^s}\right)^t
=\frac{1}{(10\Delta^2)\sp{t}}\cdot \left( e\sp{\frac{-8980}{10266.75}}\right)^t
\end{eqnarray*}

Hence, $RHS_3\geq\left(\frac{ 0.5634}{10\Delta^2}\right)^t > \left(\frac{1}{17.9856\Delta^2}\right)^t$, what proves inequality (\ref{drei}). \\

Since the inequalities (\ref{eins}) (\ref{zwei}) and (\ref{drei}) are valid, by the Local Lemma with positive probability none of the bad events happens. Hence, there is a total Thue colouring of the graph $G$ from lists of size $17.9856\Delta^2$.  \\
\end{proof}


\begin{thebibliography}{10} \label{bibliography}


\bibitem {AGHR02}
N. Alon, J. Grytzuk, M. Ha\l uszczak, O. Riordan, {\em Non-repetitive colorings of graphs}, Random. Struct. Algor. 21
(2002), 336--346.

\bibitem{AlSp00}
N. Alon, J. H. Spencer, {\em The probabilistic method}, New York: Wiley-Interscience (2000).



\bibitem {BaCz11}
J. Bar\'at, J. Czap, {\em Facial nonrepetitive vertex coloring of plane
graphs}, Journal of Graph Theory 77 (2013), 115--121.

\bibitem {BaVa07}
J. Bar\'at, P. P. Varj\'u, {\em On square-free vertex colorings of graphs}, Studia Scientiarum Mathematicarum Hungarica 44(3) (2007), 411--422.

\bibitem {BGKNP07}
B. Bre\v sar, J. Grytczuk, S. Klav\v zar, S. Niwczyk, I. Peterin, {\em Nonrepetitive colorings of trees}, Discrete Math. 307 (2007), 163--172.



\bibitem {Cur87}
J. D. Currie, {\em Open problems in pattern avoidance}, The Amer. Math. Monthly 100 (1993), 790--793.

\bibitem {Cur02}
J. D. Currie, {\em There are ternary circular square-free words of length $n$ for $n\geq 18$},
Electron. J. Combin. 9 (2002), \#N10, 1--7.

\bibitem {CzGr07}
S. Czervi\' nski, J. Grytczuk, {\em Nonrepetitive colorings of graphs}, Electronic Notes in Discrete Math. 28 (2007), 453--459.

\bibitem{Dujmovic}
V. Dujmovi\'c, G. Joret, J. Kozik and D. R. Wood, {\em Nonrepetitive Colouring via Entropy Compression}, arXiv:1112.5524v2.

\bibitem{ErLo75}
P. Erd\H{o}s, L. Lov\'asz, {\em Problems and results on 3-chromatic hypergraphs and some related questions}, In A. Hajnal, R. Rado, and V. T. S\'os, eds. {\em Infinite and Finite Sets (to Paul Erd\H{o}s on his $60\sp{th}$ birthday)}, II. North-Holland (1975), 609--627.


\bibitem {Gr07}
J. Grytczuk, {\em Thue type problems of graphs, points and numbers}, Discrete Math. 308 (2008), 4419--4429.

\bibitem {GKM10}
J. Grytczuk, J. Kozik, P. Micek, {\em New approach to nonrepetitive
sequences}, Random. Struct. Algor. 42 (2013), 214--225.

\bibitem {GPZ10}
J. Grytczuk, J. Przyby\l o, X. Zhu, {\em Nonrepetitive list colourings of paths}, Random. Struct. Algor. 38 (2011), 162--173.


\bibitem {HJSS09}
F. Havet, S. Jendro\ml,  R. Sot\' ak,  E. \v Skrabu\ml \' akov\' a, {\em Facial non-repetitive edge colouring of plane graphs}, Journal of Graph Theory 66 (2011), 38--48.




\bibitem {JSES11}
J. Schreyer, E. \v Skrabu\ml \' akov\' a, {\em On the Facial Thue Choice Index of Plane Graphs}, Discrete Math. 312 (2012), 1713--1721.

\bibitem {Th06}
A. Thue, {\em \"{U}ber unendliche Zeichenreichen}, Norske Vid. Selsk. Skr., I Mat. Nat. Kl., Christiana 7 (1906), 1-22.



\end{thebibliography}
\end{document}